\documentclass[UTF8]{article}

\topmargin=-0.5cm\textwidth 16cm\textheight 22cm

\oddsidemargin=-0.5cm

\UseRawInputEncoding

\usepackage{color}
\usepackage{epsfig}
\usepackage{subfig}
\usepackage{mathrsfs}
\usepackage{booktabs}
\usepackage{multirow}
\usepackage{amsmath,amssymb}
\usepackage{amsthm}
\usepackage{fancyhdr}
\usepackage{tabularx}
\usepackage{xfrac}
\usepackage{mathptmx}
\usepackage{mathrsfs}
\usepackage{graphicx,epsfig}
\usepackage{times}

\usepackage[noend]{algpseudocode}
\usepackage{algorithmicx,algorithm}

\newtheorem{theorem}{Theorem}[section]

\newtheorem{defn}{Definition}[section]
\newtheorem{coro}{Corollary}[section]
\newtheorem{prop}{Proposition}[section]
\newtheorem{lemma}{Lemma}[section]
\newtheorem{remark}{Remark}[section]

\hyphenation{op-tical net-works semi-conduc-tor}

\begin{document}

\title{Distributed Coverage Control of Multi-Agent Systems with Load Balancing in Non-convex Environments}

\author{Chao Zhai, Pengyang Fan~\thanks{Chao Zhai and Pengyang Fan are with School of Automation, China University of Geosciences, Wuhan 430074 China, and with Hubei Key Laboratory of Advanced Control and Intelligent Automation for Complex Systems and Engineering Research Center of Intelligent Technology for Geo-exploration, Ministry of Education, Wuhan 430074 China. Email: zhaichao@amss.ac.cn}}

\maketitle

\begin{abstract}
It is always a challenging task to service sudden events in non-convex and uncertain environments, and multi-agent coverage control provides a powerful theoretical framework to investigate the deployment problem of mobile robotic networks for minimizing the cost of handling random events. Inspired by the divide-and-conquer methodology, this paper proposes a novel coverage formulation to control multi-agent systems in the non-convex region while equalizing the workload among subregions. Thereby, a distributed coverage controller is designed to drive each agent towards the desired configurations that minimize the service cost by integrating with the rotational partition strategy. In addition, a circular search algorithm is proposed to identify optimal solutions to the problem of lowering service cost. Moreover, it is proved that this search algorithm enables to approximate the optimal configuration of multi-agent systems with the arbitrary small tolerance. Finally, numerical simulations are implemented to substantiate the efficacy of proposed coverage control approach.
\end{abstract}

Keywords: coverage control, multi-agent systems, equitable partition, distributed coordination

\section{Introduction}

The rapid development of semiconductor devices and communication technologies makes it possible to manufacture plenty of low-cost smart sensors or mobile robots, which can find applications in different fields. In contrast to the limited capability of a single sensor or robot, a group of wireless sensors or mobile robots are able to implement a variety of complicated coordination missions, such as wilderness search and rescue~\cite{mac14}, patrolling along the border~\cite{far17}, region coverage~\cite{auto13,cor04,song20}, environment monitoring~\cite{ctt21}, and missile interception~\cite{jgcd16}, to name just a few. The successful fulfillment of the above missions requires the design of cooperative control approaches for multi-agent systems. As a result, multi-agent coordination control has attracted lots of researchers in various fields in the past decades.

As a typical coordination behavior, multi-agent coverage centers on the design of control strategies that enable agents to cooperatively visit every point of interest in the given region or the coverage rate of a given terrain to guarantee no area left in the coverage region, and meanwhile optimize the index of coverage performance. In multi-robot systems, cooperative coverage problem can be classified into three modes: sweep coverage, barrier coverage and blanket coverage~\cite{gage92,book21}.  Nevertheless, environmental uncertainties and distributed coordination have made it a challenging issue for researchers to develop an effective cooperative control approach to fulfilling the coverage tasks in short time periods, while maintaining a desired level of coverage quality.

The environmental uncertainties can be handled via the scheme of region partition, and Voronoi partition and equitable workload partition have been widely investigated in the current studies. For example, a static coverage optimization problem with area constraints is considered in~\cite{cort10}, where the move-to-center-and-compute-weight strategy is designed to drive multi-agent systems towards the set of center generalized Voronoi configurations while optimizing the coverage quality. By taking into account time as the proximity metric, a Voronoi-like partition is conducted for spatiotemporal coverage in a drift field, which allows to correlate each agent with arbitrary points in its sub-region in light of the minimum time-to-go~\cite{bak13}. In order to allow for  accurate environment monitoring at varying resolution, a probabilistic spatial sensing model is proposed for sensor networks with adjustable sensing range by introducing the statistical distance, which extending the  the standard Voronoi-based coverage control law~\cite{ars19}. To deal with the changing boundaries, a  Voronoi-based blanket coverage method is developed for sensor networks to adapt to the varying coverage region by incorporating the boundary dynamics into the control law~\cite{abb19}. In addition, the geodesic Voronoi partition is employed to deploy a team of agents in the environments involving mixed-dimensional and hybrid cases~\cite{liu21}. In spite of the advantages in distributed implementation, Voronoi partition is subject to high computation costs in determining sub-region boundaries and centroids.  As a result, a distinct coverage formulation based on equitable workload partition is proposed by dividing the coverage region into multiple stripes and further partitioning each stripe into sub-stripes with the same workload~\cite{auto13,tsmc21}. Therein, each agent only needs to complete the workload in its own sub-stripe for minimizing the coverage time. Since the optimal coverage time is unavailable due to uncertainties, the error between the actual coverage time and the optimal time is estimated. The equitable partition policies are particularly appealing to researchers due to its ubiquitous applications and ease of analysis~\cite{pav11}.
Nevertheless, existing studies based on equitable workload partition mainly deals with coverage problem of regular regions (e.g., rectangular area, convex polygon, regions with parallel boundaries) and can not directly be applied to the irregular non-convex environment. Moreover, these partition algorithms either have to predetermine the sub-region weights or are subject to high computation costs and even the losing of subregion connectivity~\cite{pal19}.
Besides, existing coverage algorithms neither guarantee the optimality of solutions in theory nor provides a quantitative assessment of coverage performance as compared to the optimal configuration.

To address the foregoing issues, this paper aims to develop a new partition-based formulation for efficient coverage in non-convex uncertain environments. In brief, the core contributions of this work are listed as follows.
\begin{enumerate}
\item Develop a novel coverage formulation of multi-agent systems for minimizing the service cost in non-convex uncertain environment.
\item Design a distributed control strategy to drive each agent towards the desired location while reconciling the conflict with load balancing.
\item Propose a circular search algorithm to identify the optimal configuration of multi-agent systems with theoretical guarantee on the optimality of service cost.
\end{enumerate}

The remainder of this paper is organized as follows. Section~\ref{sec:prob} formulates the coverage control problem of multi-agent systems in uncertain environment. Section~\ref{sec:main} provides the distributed control algorithm and technical analysis.  Section~\ref{sec:cas} presents simulation results to validate the proposed control strategy. Section~\ref{sec:con} concludes this paper and discusses future work.

\section{Problem Formulation}\label{sec:prob}
This section formulates the coverage problem of multi-agent systems for optimal monitoring in uncertain environments.
Consider a two-dimensional annular region $\Omega$, which is enclosed by two continuous closed curves (see Fig.~\ref{reg}),
and the inner and outer curves are described by polar equations $r_{in}(\theta)$ and $r_{out}(\theta)$ with respect to the origin $O$, respectively.
Here, $\theta\in[0,2\pi)$ refers to the phase angle at a point on the curve.
Then the mathematical expression of coverage region $\Omega$ is given by $\Omega=\{{({r,\theta})|{r_{in}}(\theta) \le r \le {r_{out}}(\theta)}\}$.
Suppose that two regions enclosed by $r_{in}(\theta)$ and $r_{out}(\theta)$ are star-shaped with respect to a common reference point, respectively. The definition of star-shaped set is given below~\cite{tu11}.

\begin{defn}\label{star}
A subset $\Omega\subset R^2$ is star-shaped with respect to a reference point $\mathbf{O}\in\Omega$ if for
every $\mathbf{p}\in\Omega$, the line segment from $\mathbf{p}$ to $\mathbf{O}$ lies in $\Omega$.
\end{defn}

\begin{figure}
\scalebox{0.1}[0.1]{\includegraphics{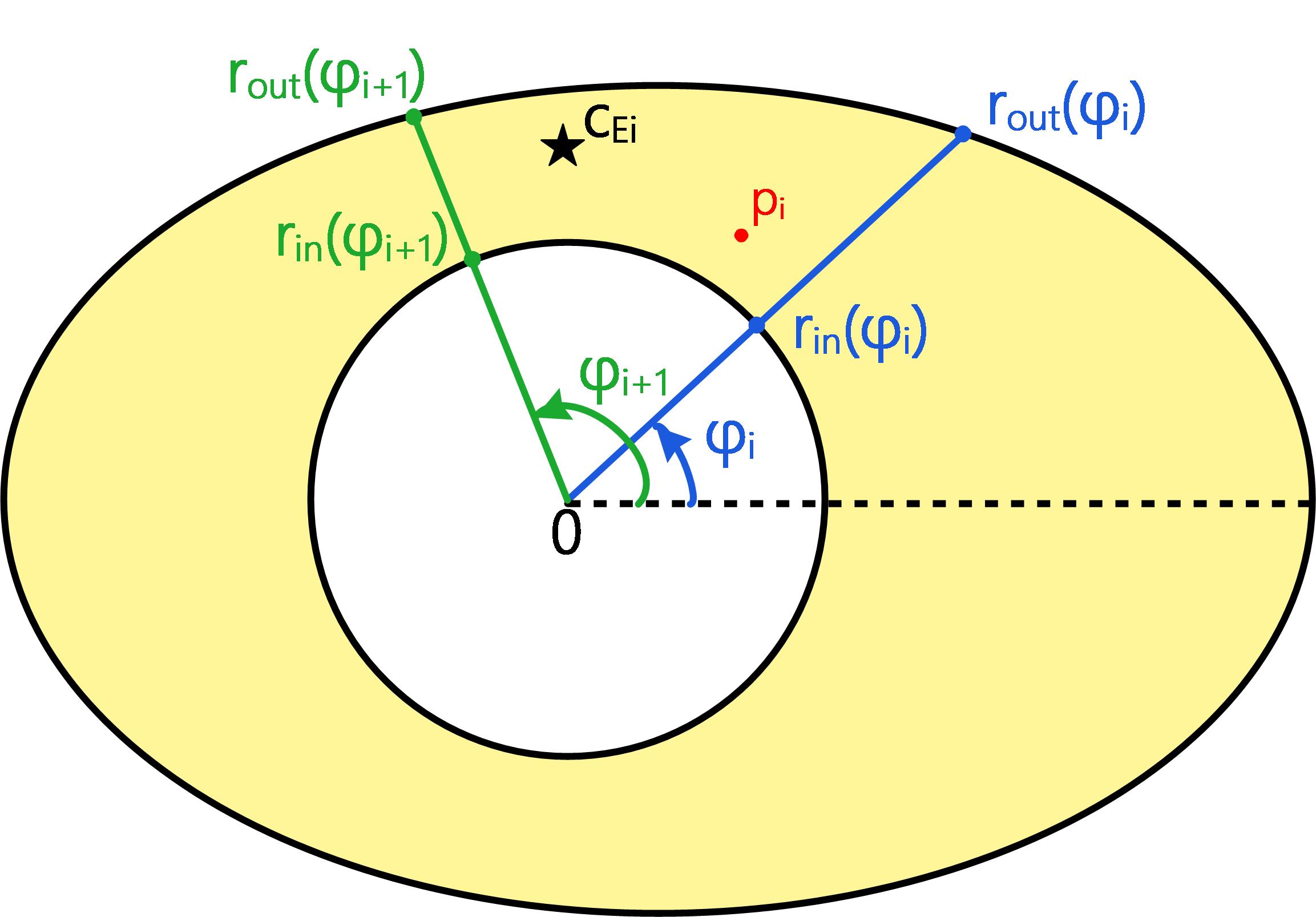}}\centering
\caption{\label{reg} Illustration of non-convex coverage region with a hole inside. The coverage is divided into multiple subregions by partition bars, and the origin $\mathbf{O}$ refers to a common reference point for the inner and outer star-shaped sets. The black star denotes the centroid of subregion, while the red dot represents the position of agents.}
\end{figure}


The coverage region can be approximated by polygons, and it has been proved that the problem of testing a star-shaped polygon and identifying a reference point can be solved in linear time~\cite{mat96}.
Define a density function $\rho: \Omega\mapsto R^{+}$ with $\rho(r,\theta)\in[\underline{\rho},\bar{\rho}]$, and it can characterize the importance of a certain point in the region or the amount of information or workload at a certain point of the region. Each agent is equipped with a virtual partition bar that is represented
by $\{ ( {r,{\varphi _i}}) \subset \Omega~|~{r_{in}}({{\varphi _i}}) \le r \le {r_{out}}( {{\varphi _i}})\}$, $i\in I_N=\{ {1,2,...,N}\}$, where ${\varphi_i} \in [ {0,2\pi })$ is the phase angle of partition bar for the $i$-th agent. The partition bars are numbered sequentially as follows $0\leq\varphi_1(0) <\varphi_2(0)<\cdot\cdot\cdot<\varphi_N(0)<2\pi$, where $\varphi_i(0)$ represents the initial phase of partition bar. Thus, a group of $N$ agents enable to divide the coverage region $\Omega$ into $N$ sub-regions, i.e. $\Omega=\bigcup_{i=1}^{N}E_i$, where $E_i$ denote the phase angle of partition bar for the $i$-th agent, and it is enclosed by inner and outer curves as well as partition bars $\varphi_{i}$ and $\varphi_{i+1}$. In addition, the position of the $i$-th agent is denoted by ${p}_i\in\Omega$, $i\in I_N$. The area in charge of each agent is represented as follows
\begin{equation*} 
E_i(\varphi)=\left\{ {\begin{array}{*{20}{c}}
{\left\{ {(r,\theta)\in\Omega~|~\varphi_i < \theta < 2\pi,~ 0\le\theta <\varphi_{i + 1}} \right\},}&{\begin{array}{*{20}{c}}&{if~\varphi_{i + 1} <\varphi_i}
\end{array}}\\
{\left\{ {(r,\theta)\in\Omega~|~\varphi_i < \theta < \varphi_{i+1}}\right\}.}&{\text{otherwise}}
\end{array}}\right.
\end{equation*}
where the function $\omega(\theta)$ is given by $\omega(\theta)=\int_{{{r}_{in}}(\theta)}^{{{r}_{out}}(\theta)}{\rho( r,\theta)rdr}$. As a result, the workload on the $i$-th subregion can be computed by
\begin{equation}\label{mi}
     {m_i} = \left\{
      {\begin{aligned}
       &{\int_{{\varphi_i}}^{2\pi } \omega  (\theta )d\theta  + \int_0^{{\varphi _{i + 1}}} \omega  (\theta )d\theta ,}& \quad & if~{\varphi_{i+1}<\varphi_i}\\
	   &{\int_{{\varphi_i}}^{\varphi_{i+1}}\omega (\theta)d\theta.} & \quad & \text{otherwise} \\
\end{aligned}} \right.
\end{equation}
with $\varphi_{N+1}=\varphi_1$. To balance workload among subregions, the dynamics of partition bar is designed as
\begin{equation}\label{sys}
\dot{\varphi}_i = \kappa_{\varphi}(m_i-m_{i-1}), \quad i\in I_N
\end{equation}
with the positive constant ${\kappa_\varphi}$ and $m_0=m_N$. Note that the evolution of $\varphi_i$ may exceed the range of interval $[0,2\pi)$.
Since phase angle $\varphi_i$ wraps around while reaching $2\pi$, the modulo operation is adopted with $\varphi_i=\text{mod}(\varphi_i,2\pi)$
in this work to ensure $\varphi_i\in[0,2\pi)$, $i\in I_N$.

Inspired by the work in \cite{cor04}, the performance index of multi-agent coverage problem is designed as follows
\begin{equation}\label{cost}
J(\mathbf{\varphi},\mathbf{p})= \sum_{i=1}^N \int_{E_i(\mathbf{\varphi})}f({p}_i,q)\rho(q)dq.
\end{equation}
with $\mathbf{p}=({p}_1,{p}_2,...,{p}_N)^T$ and $\mathbf{\varphi}=(\varphi_1,\varphi_2,...,\varphi_N)^T$. In addition, $f({p}_i,q)$ is continuously differentiable with respect to ${p}_i$, and it is used to quantify the cost of moving from ${p}_i$ to $q$ to service the event. Thus, $f({p}_i,q)$ is closely dependent on the distance between ${p}_i$ and $q$. The dynamics of multi-agent system is presented as
\begin{equation}\label{dp}
    \dot{p}_i = {u}_i,\quad i\in I_N
\end{equation}
with the control input $\mathbf{u}_i$. The performance index characterizes the total cost of multi-agent systems to service the events on the region $\Omega$,
and a lower value of performance index (\ref{cost}) indicates a better deployment of agents and partition bars. Essentially, our goal is to design a distributed control algorithm to solve the following optimization problem
\begin{equation}\label{min_cost}
\min_{\mathbf{\varphi},\mathbf{p}} J(\mathbf{\varphi},\mathbf{p})
\end{equation}
by deploying agents and equalizing the workload among subregions with partition bars.


\section{Main Results}\label{sec:main}
This section presents a distributed coverage algorithm, and theoretical analysis is conducted as well. Firstly, Some key lemmas are provided,
followed by a distributed search algorithm to find optimal solutions to Problem (\ref{min_cost}) with a specified tolerance. Then technical
analysis is presented to work out the conditions that guarantee the accuracy of the proposed algorithm. Finally, theoretical results are
applied to the classic location optimization problem.

\begin{lemma}\label{lem_equ}
Partition dynamics (\ref{sys}) equalizes the workload on each subregion.
\end{lemma}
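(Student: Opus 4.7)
The plan is to exhibit a Lyapunov-type function that measures how far the workloads are from being equalized, show that it is monotonically non-increasing along trajectories of (\ref{sys}), and then invoke LaSalle's invariance principle. A natural candidate is the variance of workloads
\[
V(\boldsymbol{\varphi}) \;=\; \tfrac{1}{2}\sum_{i=1}^{N}\bigl(m_i-\bar m\bigr)^{2}, \qquad \bar m \;=\; \tfrac{1}{N}\sum_{i=1}^{N} m_i,
\]
because $V\ge 0$ with $V=0$ precisely when all subregional workloads coincide, which is the target configuration of the lemma.

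First I would verify that the total workload is conserved along (\ref{sys}). Since $\sum_{i=1}^N m_i = \int_{0}^{2\pi}\omega(\theta)\,d\theta$ (the integral is cyclic and does not depend on the $\varphi_i$), the quantity $\bar m$ is a constant of motion, so $\dot V = \sum_i m_i \dot m_i$. Next I would apply the Leibniz rule to (\ref{mi}) to obtain
\[
\dot m_i \;=\; \omega(\varphi_{i+1})\dot\varphi_{i+1} \;-\; \omega(\varphi_i)\dot\varphi_i \;=\; \kappa_\varphi\bigl[\omega(\varphi_{i+1})(m_{i+1}-m_i)-\omega(\varphi_i)(m_i-m_{i-1})\bigr],
\]
with indices understood cyclically (so that $m_{N+1}=m_1$ and $m_0=m_N$). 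Substituting into $\dot V$ and performing a cyclic summation-by-parts — reindexing the first term with $j=i+1 \bmod N$ — telescopes the cross terms and yields the compact expression
\[
\dot V \;=\; -\,\kappa_\varphi \sum_{i=1}^{N}\omega(\varphi_i)\,(m_i-m_{i-1})^{2} \;\le\; 0,
\]
where non-positivity uses $\omega(\varphi_i)=\int_{r_{in}(\varphi_i)}^{r_{out}(\varphi_i)}\rho(r,\varphi_i)r\,dr>0$, which follows from the standing bound $\rho\ge\underline{\rho}>0$ together with $r_{out}>r_{in}$.

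I would then close the argument with LaSalle's invariance principle. The trajectories of (\ref{sys}) evolve on the compact torus $(\mathbb{R}/2\pi\mathbb{Z})^N$, $V$ is continuously differentiable there, and the set where $\dot V=0$ forces $m_i-m_{i-1}=0$ for every $i$ (since each $\omega(\varphi_i)$ is strictly positive). Consequently every solution converges to the largest invariant set contained in $\{\boldsymbol{\varphi}:m_1=m_2=\cdots=m_N\}$; on this set the right-hand side of (\ref{sys}) vanishes identically, so the set is invariant and coincides with the equilibria, establishing workload equalization.

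The two points that require the most care are: (i) handling the wrap-around case $\varphi_{i+1}<\varphi_i$ in (\ref{mi}) when differentiating, which I expect to treat cleanly by viewing the phase variables as elements of $\mathbb{R}/2\pi\mathbb{Z}$ so that $m_i$ is a smooth function of the arc between consecutive partition bars and the Leibniz formula above holds without a separate case analysis; and (ii) confirming that the argument of LaSalle is applicable, i.e.\ that trajectories remain bounded — which is immediate thanks to the modulo construction — and that the positivity of $\omega$ is preserved along the motion, which is guaranteed by the hypotheses placed on $\rho$ and on the annular geometry.
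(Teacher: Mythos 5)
Your proof is correct and follows essentially the same route as the paper: the identical Lyapunov function $V=\tfrac12\sum_i(m_i-\bar m)^2$, conservation of total workload, the Leibniz-rule computation of $\dot m_i$, and the cyclic summation by parts giving $\dot V=-\kappa_\varphi\sum_i\omega(\varphi_i)(m_i-m_{i-1})^2\le 0$. Your explicit appeal to LaSalle's invariance principle is in fact slightly more careful than the paper, which asserts $\dot V<0$ and jumps directly to convergence; otherwise the two arguments coincide.
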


\begin{proof}
Construct the following Lyapunov function
\[{V}(\mathbf{\varphi}) = \frac{1}{2}\sum\limits_{i=1}^N\left(m_i-\bar{m}\right)^2,
\quad \bar m = \frac{1}{N}{\int_0^{2\pi } {\omega \left( \theta  \right)d\theta }}\]
The time derivative of Lyapunov function along the trajectory of the system is given as follows
\[\begin{aligned}
\frac{dV(\mathbf{\varphi})}{dt}= \sum\limits_{i = 1}^N {\left( {{m_i} - \bar m} \right){{\dot m}_i}} &=\sum\limits_{i = 1}^N {{m_i}{{\dot m}_i}}-\bar{m}\sum_{i=1}^{N}\dot{m}_i \\
&= \sum\limits_{i = 1}^N {{m_i}\left( {{{\dot \varphi }_{i + 1}}\omega \left( {{\varphi _{i + 1}}} \right) - {{\dot \varphi }_i}\omega \left( {{\varphi _i}} \right)} \right)} \\
&=-\sum\limits_{i = 1}^N {\left( {{m_i} - {m_{i - 1}}} \right){{\dot \varphi }_i}\omega \left({{\varphi_i}} \right)} \\
&=-\kappa_{\varphi}\sum_{i=1}^N(m_i-m_{i-1})^2\omega\left({{\varphi _i}}\right)
\end{aligned}\]
Since both $\kappa_\varphi>0$ and $\omega \left({{\varphi_i}}\right)>0$, this implies that $\dot{V}(\mathbf{\varphi})<0$. Therefore, the state of the system will approach the equilibrium point, which means that $m_1(t)=m_2(t)=...=m_N(t)$, as time goes to the infinity.
\end{proof}

\begin{remark}
Let $\bar{\varphi}(t)=\frac{1}{N}\sum_{i=1}^{N}\varphi_i(t)$ represent the average phase of all partition bars. The time derivative of $\bar{\varphi}(t)$ is given by
$$
\frac{d\bar{\varphi}(t)}{dt}=\frac{1}{N}\sum_{i=1}^{N}\dot{\varphi}_i(t)
=\frac{\kappa_{\varphi}}{N}\sum_{i=1}^{N}\left(m_i-m_{i-1}\right)=0,
$$
which implies that the average phase of all partition bars keeps unchanged during the entire partition process.
\end{remark}


\begin{lemma}\label{lem_comp}
$$
\sum_{i=1}^{N}(m_i-m_{i-1})^2\geq\frac{2\lambda_{\min}(S)V(\mathbf{\varphi})}{N},
$$
where $\lambda_{\min}(S)$ denotes the minimum eigenvalue of the positive definite matrix $S$ as follows
\begin{equation*}
S=\left(
         \begin{array}{cccccccc}
           6 & 2 & 3 & . & . & 3 & 3 & 4 \\
           2 & 4 & 1 & . & . & 2 & 2 & 3 \\
           3 & 1 & 4 & . & . & 2 & 2 & 3 \\
           . & . & . & . & . & . & . & . \\
           . & . & . & . & . & . & . & . \\
           3 & 2 & 2 & . & . & 4 & 1 & 3 \\
           3 & 2 & 2 & . & . & 1 & 4 & 2 \\
           4 & 3 & 3 & . & . & 3 & 2 & 6 \\
         \end{array}
       \right)\in R^{(N-1)\times(N-1)}, \quad~N>2
\end{equation*}
and $S=8$ for $N=2$.
\end{lemma}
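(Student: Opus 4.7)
The plan is to recast the claim as a Rayleigh-quotient bound on a reduced quadratic form, and then convert back to $V(\varphi)$ via Cauchy--Schwarz.

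First I would introduce the centered variables $y_i = m_i - \bar m$. By the definition of $\bar m$ they satisfy $\sum_{i=1}^{N} y_i = 0$, so $y_N = -\sum_{i=1}^{N-1} y_i$. Since $m_i - m_{i-1} = y_i - y_{i-1}$ (with $y_0 := y_N$) and $V(\varphi) = \tfrac{1}{2}\sum_{i=1}^{N} y_i^2$, the target inequality is purely a statement about the reduced coordinate vector $\mathbf{z} = (y_1,\ldots,y_{N-1})^T$.

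The core algebraic step is to verify that, after eliminating $y_N$, the quadratic form $\sum_{i=1}^{N}(y_i - y_{i-1})^2$ coincides with $\mathbf{z}^T S\, \mathbf{z}$ for exactly the matrix $S$ given in the statement. The $N-2$ interior differences $(y_i-y_{i-1})^2$ with $2\le i\le N-1$ contribute a standard tridiagonal ``path-Laplacian'' piece, while the two wrap-around terms $(y_1 - y_N)^2 = (2y_1 + y_2 + \cdots + y_{N-1})^2$ and $(y_N - y_{N-1})^2 = (y_1 + \cdots + y_{N-2} + 2y_{N-1})^2$ contribute dense rank-one pieces. Expanding both squares and collecting the coefficient of $y_i y_j$ reproduces the entries of $S$ (diagonal $6,4,\ldots,4,6$; the pairs $(1,2)$ and $(N-2,N-1)$ equal to $2$; the corner $(1,N-1)$ equal to $4$; and so on). This matrix-identification step is where the real bookkeeping lies and is the main obstacle; everything else is routine.

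Once $\sum_{i=1}^{N}(y_i - y_{i-1})^2 = \mathbf{z}^T S\, \mathbf{z}$ is in hand, the Rayleigh inequality gives $\mathbf{z}^T S\, \mathbf{z} \ge \lambda_{\min}(S)\|\mathbf{z}\|^2$. To relate $\|\mathbf{z}\|^2$ back to $V(\varphi)$, note that $2V(\varphi) = \|\mathbf{z}\|^2 + y_N^2 = \|\mathbf{z}\|^2 + (\mathbf{1}^T \mathbf{z})^2$; Cauchy--Schwarz yields $(\mathbf{1}^T \mathbf{z})^2 \le (N-1)\|\mathbf{z}\|^2$, hence $2V(\varphi) \le N\|\mathbf{z}\|^2$, i.e.\ $\|\mathbf{z}\|^2 \ge 2V(\varphi)/N$. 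Chaining the two bounds delivers the claim. The case $N=2$ is a direct verification: $\sum_{i=1}^{2}(y_i - y_{i-1})^2 = 2(y_1 - y_2)^2 = 8y_1^2 = 8 V(\varphi)$, consistent with the scalar $S = 8$.
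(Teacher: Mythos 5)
Your proposal is correct and follows essentially the same route as the paper's proof: center the workloads, eliminate the last coordinate via $\sum_i e_i=0$ to write $\sum_i(m_i-m_{i-1})^2=\mathbf{e}^TS\mathbf{e}$, apply the Rayleigh bound, and then use $2V=\|\mathbf{e}\|^2+\bigl(\sum_{i=1}^{N-1}e_i\bigr)^2\le N\|\mathbf{e}\|^2$ to finish. The only additions are cosmetic (explicitly naming Cauchy--Schwarz and checking $N=2$), so there is nothing to change.
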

\begin{proof}
For simplicity, introduce the error variable $e_i=m_i-\bar{m}$, $i\in I_N$ and the vector $\mathbf{e}=(e_1,e_2,...,e_{N-1})^T$. It follows from $\sum_{i=1}^{N}e_i=0$ that
\begin{equation*}
\begin{split}
\sum_{i=1}^{N}(m_i-m_{i-1})^2&=\sum_{i=1}^{N}(e_i-e_{i-1})^2 \\
&=(e_1-e_{N})^2+\sum_{i=2}^{N-1}(e_i-e_{i-1})^2+(e_N-e_{N-1})^2 \\
&=\left(e_1+\sum_{i=1}^{N-1}e_{i}\right)^2+\sum_{i=2}^{N-1}(e_i-e_{i-1})^2+\left(\sum_{i=1}^{N-1}e_{i}+e_{N-1}\right)^2 \\
&=\mathbf{e}^TS\mathbf{e}\geq\lambda_{\min}(S)\|\mathbf{e}\|^2.
\end{split}
\end{equation*}
Considering that
$$
V(\mathbf{\varphi})=\frac{1}{2}\sum_{i=1}^N\left(m_i-\bar{m}\right)^2=\frac{1}{2}\sum_{i=1}^N(e_i)^2=\frac{1}{2}\|\mathbf{e}\|^2+\frac{1}{2}\left(\sum_{i=1}^{N-1}e_i\right)^2\leq \frac{N}{2}\|\mathbf{e}\|^2,
$$
one gets
$$
\sum_{i=1}^{N}(m_i-m_{i-1})^2\geq\lambda_{\min}(S)\|\mathbf{e}\|^2\geq\frac{2\lambda_{\min}(S)V(\mathbf{\varphi})}{N}.
$$
This completes the proof.
\end{proof}

\begin{lemma}\label{lem_exp}
There exist positive constants $c_1$ and $c_2$ such that
$|m_i(t)-m_{i-1}(t)|\leq c_1 e^{-c_2t}$, $\forall t\geq0$.
\end{lemma}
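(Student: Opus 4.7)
The plan is to combine the Lyapunov analysis of Lemma~\ref{lem_equ} with the coercivity bound in Lemma~\ref{lem_comp} to obtain exponential decay of $V(\mathbf{\varphi})$, and then convert this into the claimed pointwise bound on $|m_i-m_{i-1}|$ via the triangle inequality.

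First, I would recall from the proof of Lemma~\ref{lem_equ} that
\[
\dot{V}(\mathbf{\varphi}) \;=\; -\,\kappa_{\varphi}\sum_{i=1}^{N}(m_i-m_{i-1})^{2}\,\omega(\varphi_i).
\]
To turn this into a strict Lyapunov decay estimate, I need a uniform positive lower bound $\omega(\varphi_i)\geq\underline{\omega}>0$. This follows from the standing assumptions that $\rho(r,\theta)\geq\underline{\rho}>0$ and that the annular region has strictly positive radial width, since $\omega(\theta)=\int_{r_{in}(\theta)}^{r_{out}(\theta)}\rho(r,\theta)\,r\,dr \geq \tfrac{\underline{\rho}}{2}\bigl(r_{out}^{2}(\theta)-r_{in}^{2}(\theta)\bigr)$. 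Substituting this and invoking Lemma~\ref{lem_comp} yields
\[
\dot{V}(\mathbf{\varphi}) \;\leq\; -\,\kappa_{\varphi}\,\underline{\omega}\,\sum_{i=1}^{N}(m_i-m_{i-1})^{2} \;\leq\; -\,\frac{2\,\kappa_{\varphi}\,\underline{\omega}\,\lambda_{\min}(S)}{N}\,V(\mathbf{\varphi}).
\]

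Next, an application of the Gr\"onwall inequality (or direct integration of this scalar differential inequality) gives $V(\mathbf{\varphi}(t))\leq V(\mathbf{\varphi}(0))\,e^{-\mu t}$ with $\mu=\tfrac{2\kappa_{\varphi}\underline{\omega}\lambda_{\min}(S)}{N}>0$. Since $(m_i-\bar{m})^{2}\leq 2V(\mathbf{\varphi})$, this implies $|m_i(t)-\bar{m}|\leq\sqrt{2V(\mathbf{\varphi}(0))}\,e^{-\mu t/2}$ for every $i$. Finally, the triangle inequality yields
\[
|m_i(t)-m_{i-1}(t)| \;\leq\; |m_i(t)-\bar{m}| + |m_{i-1}(t)-\bar{m}| \;\leq\; 2\sqrt{2V(\mathbf{\varphi}(0))}\,e^{-\mu t/2},
\]
so the claim holds with $c_1=2\sqrt{2V(\mathbf{\varphi}(0))}$ and $c_2=\mu/2$.

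The only nontrivial step is justifying the uniform positive lower bound $\omega(\varphi_i)\geq\underline{\omega}$; everything else is routine Lyapunov bookkeeping once Lemmas~\ref{lem_equ} and~\ref{lem_comp} are in hand. If the paper prefers to avoid any assumption beyond those already stated, one can extract the lower bound $\underline{\omega}$ directly from the continuous density $\rho\geq\underline{\rho}$ together with the continuity of $r_{in},r_{out}$ on the compact domain $[0,2\pi]$, which ensures $\min_{\theta}\omega(\theta)>0$ whenever the coverage region has nonempty interior at every angle.
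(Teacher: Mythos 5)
Your proof is correct and follows essentially the same route as the paper: the identical Lyapunov decay estimate obtained from Lemmas~\ref{lem_equ} and~\ref{lem_comp} with the uniform lower bound $\omega\geq\underline{\omega}>0$, Gr\"onwall integration, and then a pointwise bound on $|m_i-m_{i-1}|$ with the same decay rate $c_2=\kappa_{\varphi}\underline{\omega}\lambda_{\min}(S)/N$. Your concluding triangle-inequality step is in fact slightly more careful than the paper's, which asserts $|m_i(t)-m_{i-1}(t)|\leq\sqrt{2V(t)}$ directly (only the constant $c_1$ differs, which is immaterial to the statement).
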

\begin{proof}
According to Lemma~\ref{lem_equ}, the time derivative of $V$ along the system (\ref{sys}) is given by
$$
\dot{V}=-\kappa_{\varphi}\sum_{i=1}^N(m_i-m_{i-1})^2\omega\left({{\varphi _i}}\right)\leq-\kappa_{\varphi}\omega_{\min}\sum_{i=1}^N(m_i-m_{i-1})^2
$$
with $\omega_{\min}=\min_{\varphi\in[0,2\pi]}\omega(\varphi_i)$. It follows from Lemma~\ref{lem_comp} that
$$
\dot{V}\leq-\kappa_{\varphi}\omega_{\min}\sum_{i=1}^N(m_i-m_{i-1})^2\leq -\frac{2\kappa_{\varphi}\omega_{\min}\lambda_{\min}(S)}{N}V.
$$
Solving the above differential inequality yields
$$
V(t)\leq V(0)e^{-\frac{2\kappa_{\varphi}\omega_{\min}\lambda_{\min}(S)}{N}t}
$$
and $|m_i(t)-m_{i-1}(t)|\leq\sqrt{2V(t)}\leq c_1e^{-c_2 t}$
with $c_1=\sqrt{2V(0)}$ and $c_2=\frac{\kappa_{\varphi}\omega_{\min}\lambda_{\min}(S)}{N}$. The proof is thus completed.
\end{proof}

\begin{lemma}\label{lem_col}
Collision avoidance of split bars is guaranteed with partition dynamics (\ref{sys}).
\end{lemma}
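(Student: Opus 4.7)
The plan is to show that the angular gaps $\delta_i := \varphi_{i+1} - \varphi_i$ (with the wrap-around convention $\delta_N := \varphi_1 + 2\pi - \varphi_N$) stay strictly positive for all $t \geq 0$, which is precisely the statement that no two adjacent partition bars ever collide. First I would differentiate the gap variables along (\ref{sys}) to obtain
\[
\dot{\delta}_i \;=\; \dot{\varphi}_{i+1} - \dot{\varphi}_i \;=\; \kappa_{\varphi}\bigl(m_{i+1} + m_{i-1} - 2 m_i\bigr),
\]
a discrete-Laplacian-type expression. Then I would observe that since $\rho \geq \underline{\rho} > 0$ and the radial extent $r_{out}(\theta) - r_{in}(\theta)$ of the annulus is bounded below by a positive constant, the weight $\omega(\theta)$ is uniformly positive; hence $m_i = 0$ if and only if $\delta_i = 0$, and $m_i > 0$ otherwise.

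Next I would argue by contradiction using a Nagumo-type boundary argument. Assume the set $\{ t > 0 : \min_i \delta_i(t) = 0 \}$ is nonempty and let $t^{\star}$ be its infimum. Because $\delta_i(0) > 0$ for every $i$ and each $\delta_i(\cdot)$ is continuous, we have $t^{\star} > 0$ and $\min_i \delta_i(t^{\star}) = 0$. Let $J = \{ i : \delta_i(t^{\star}) = 0 \}$ denote the indices of the collapsed gaps. Since the dynamics (\ref{sys}) telescope, $\sum_{i=1}^{N}\dot{\varphi}_i = 0$, so $\sum_{i=1}^{N}\delta_i(t) \equiv 2\pi$; this forces $J$ to be a proper subset of $I_N$. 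On the cyclic index set this means there exists at least one $i^{\star} \in J$ with a neighbour $i^{\star} \pm 1 \notin J$ (otherwise $J$ would be closed under taking neighbours and hence equal to $I_N$). For this $i^{\star}$, $m_{i^{\star}}(t^{\star}) = 0$ while $m_{i^{\star}-1}(t^{\star}) + m_{i^{\star}+1}(t^{\star}) > 0$, so the identity above yields $\dot{\delta}_{i^{\star}}(t^{\star}) > 0$. By continuity of the right-hand side of (\ref{sys}) in $\varphi$, there is an $\varepsilon > 0$ such that $\dot{\delta}_{i^{\star}}(t) > 0$ on $(t^{\star} - \varepsilon, t^{\star}]$, so $\delta_{i^{\star}}$ is strictly increasing there. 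Consequently $\delta_{i^{\star}}(t^{\star} - \varepsilon/2) < \delta_{i^{\star}}(t^{\star}) = 0$, contradicting the minimality of $t^{\star}$, which forced $\delta_{i^{\star}}(t) > 0$ for all $t \in (0, t^{\star})$. This contradiction finishes the proof.

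The main obstacle I anticipate is the "corner" case in which several neighbouring gaps contract to zero at the same instant; if one picks an arbitrary $i \in J$ there is a priori no guarantee that either $m_{i-1}$ or $m_{i+1}$ survives, and then $\dot{\delta}_i(t^{\star})$ could vanish rather than be strictly positive. The fix is exactly the choice of $i^{\star}$ on the topological boundary of $J$ inside the cyclic index set, which is where the conservation law $\sum \delta_i = 2\pi$ is genuinely used. A secondary technical point I would flag is the $\mathrm{mod}\,2\pi$ wrapping on $\varphi_i$: since the $\delta_i$'s are invariant under this wrapping, carrying out the whole argument at the level of the gap variables sidesteps any apparent discontinuity in the phases themselves.
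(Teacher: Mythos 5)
Your proof is correct, and it takes a noticeably different route from the paper's. The paper first establishes the same equivalence you use (adjacent bars collide iff $m_i=0$, via $\underline{\omega}\,\delta_i\le m_i\le\bar{\omega}\,\delta_i$), but then argues on the workloads rather than the gaps: it looks at the minimizing index $i^{*}_t=\arg\min_i m_i(t)$ and observes that
\[
\dot{m}_{i^{*}_t}=\kappa_{\varphi}\,\omega(\varphi_{i^{*}_t+1})\bigl(m_{i^{*}_t+1}-m_{i^{*}_t}\bigr)+\kappa_{\varphi}\,\omega(\varphi_{i^{*}_t})\bigl(m_{i^{*}_t-1}-m_{i^{*}_t}\bigr)\ge 0
\]
by definition of the minimum, so that $\min_i m_i(t)$ never drops below its positive initial value. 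You instead work with the gap variables $\delta_i$, whose dynamics are an unweighted discrete Laplacian, and run a first-collision-time contradiction. Both arguments exploit the same structural fact (the Laplacian sign structure at an extremal index), but they trade off differently. The paper's version is shorter and yields the quantitative bound $m_i(t)\ge\min_j m_j(0)$; however, as written it integrates $\dot{m}_{i^{*}_t}$ over time while the minimizing index $i^{*}_t$ may switch, which strictly speaking calls for a Dini-derivative argument on $t\mapsto\min_i m_i(t)$ — a technicality your formulation sidesteps entirely. Your version is longer but is more careful on exactly the point the paper glosses over in a different guise: the simultaneous collapse of several adjacent gaps, which you resolve by choosing $i^{\star}$ on the boundary of the collapsed set $J$ and using $\sum_i\delta_i\equiv 2\pi$ to guarantee $J\neq I_N$, thereby securing the strict inequality $\dot{\delta}_{i^{\star}}(t^{\star})>0$ needed for the contradiction. (Two minor remarks: the identity $\sum_i\delta_i=2\pi$ holds by definition of the gaps on the circle, so the appeal to $\sum_i\dot{\varphi}_i=0$ is superfluous; and your standing assumption that $\omega$ is uniformly positive is the same $\omega_{\min}>0$ hypothesis the paper uses, so no gap there.)
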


\begin{proof}
First of all, one will obtain the equivalent condition on the collision of split bars.
For $i\in I_N$, if the split bars of Agent $i$ and Agent $i+1$ are collided,
one gets $\varphi_i=\varphi_{i+1}$ and thus $m_i=0$ due to $m_i=\int_{\varphi_i}^{\varphi_{i+1}}\omega(\theta)d\theta$.
On the other hand, if $m_i=0$, one obtains
$$
\omega_{\min}(\varphi_{i+1}-\varphi_i)\leq m_i=\int_{\varphi_i}^{\varphi_{i+1}}\omega(\theta)d\theta=0\leq\omega_{\max}(\varphi_{i+1}-\varphi_i),
$$
which indicates $\varphi_{i+1}-\varphi_i\leq m_i/\omega_{\min}=0$ and $\varphi_{i+1}-\varphi_i\geq m_i/\omega_{\max}=0$ as a result of
$$
\omega_{\min}=\min_{\theta\in(0,2\pi]}\omega(\theta)>0, \quad \omega_{\max}=\max_{\theta\in(0,2\pi]}\omega(\theta)>0
$$
This implies $\varphi_i=\varphi_{i+1}$ and the collision of split bars from Agent $i$ and Agent $i+1$. Thus, the collision of split bars (i.e., $\varphi_i=\varphi_{i+1}$) is equivalent to $m_i=0$, $i\in I_N$. In what follows, one will prove that $m_i(t)>0$, $\forall t>0$ and $i\in I_N$ for the dynamics (\ref{sys}). Let $i^{*}_t=\arg\min_{i\in I_N}m_i(t)$ and it follows from $m_{i^{*}_t}=\min_{i\in I_N}m_i(t)$ that
\begin{equation*}
\begin{split}
\dot{m}_{i^{*}_t}&=\omega(\varphi_{i^{*}_t+1})\dot{\varphi}_{i^{*}_t+1}(t)-\omega(\varphi_{i^{*}_t})\dot{\varphi}_{i^{*}_t}(t) \\
&=\omega(\varphi_{i^{*}_t+1})\kappa_{\varphi}\left(m_{i^{*}_t+1}-m_{i^{*}_t}\right)-\omega(\varphi_{i^{*}_t})\kappa_{\varphi}\left(m_{i^{*}_t}-m_{i^{*}_t-1}\right) \\
&=\omega(\varphi_{i^{*}_t+1})\kappa_{\varphi}\left(m_{i^{*}_t+1}-m_{i^{*}_t}\right)+\omega(\varphi_{i^{*}_t})\kappa_{\varphi}\left(m_{i^{*}_t-1}-m_{i^{*}_t}\right)\geq 0,
\end{split}
\end{equation*}
which indicates
$$
{m}_{i^{*}_t}(t)={m}_{i^{*}_t}(0)+\int_{0}^{t}\dot{m}_{i^{*}_t}(t)dt>0, \quad \forall t>0
$$
due to ${m}_{i^{*}_t}(0)>0$. Considering that $m_i(t)\geq{m}_{i^{*}_t}(t)$, $\forall i\in I_N$ and $t>0$, it follows that $m_i(t)>0$,$\forall i\in I_N$ and $t>0$, which guarantees the collision avoidance of split bars.
\end{proof}


\begin{lemma}\label{dce}
$$
\lim_{t\to\infty}\dot{c}_{E_i}(t)={0}, \quad i\in I_N.
$$
\end{lemma}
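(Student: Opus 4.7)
The plan is to unpack $\dot c_{E_i}$ by writing the centroid as a mass-weighted integral and then reducing everything to the partition-bar velocities, which Lemma~\ref{lem_exp} already controls. Specifically, I would use
$$c_{E_i}(t) = \frac{M_i(t)}{m_i(t)}, \qquad M_i(t) := \int_{E_i(\mathbf{\varphi}(t))} q\,\rho(q)\,dq,$$
observing that the only time-dependence in $M_i$ and $m_i$ enters through the two bounding angles $\varphi_i(t)$ and $\varphi_{i+1}(t)$.

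Rewriting $M_i$ in polar coordinates as an iterated integral with outer limits $\varphi_i$ and $\varphi_{i+1}$ and applying Leibniz's rule gives
$$\dot M_i = \mathbf{h}(\varphi_{i+1})\,\dot\varphi_{i+1} - \mathbf{h}(\varphi_i)\,\dot\varphi_i, \qquad \mathbf{h}(\theta) := \int_{r_{in}(\theta)}^{r_{out}(\theta)} (r\cos\theta,\, r\sin\theta)^T \rho(r,\theta)\, r\, dr,$$
and an analogous calculation for the scalar mass yields $\dot m_i = \omega(\varphi_{i+1})\dot\varphi_{i+1} - \omega(\varphi_i)\dot\varphi_i$. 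Because $r_{in}$, $r_{out}$, and $\rho$ are bounded on the compact region $\Omega$, both $\mathbf{h}$ and $\omega$ are uniformly bounded continuous functions of $\theta$. The quotient rule then gives
$$\dot c_{E_i} = \frac{\dot M_i}{m_i} - \frac{M_i\,\dot m_i}{m_i^2},$$
where the numerators are linear combinations of $\dot\varphi_i$ and $\dot\varphi_{i+1}$ with uniformly bounded coefficients, and $M_i$ is uniformly bounded since $\Omega$ is compact and $\rho \le \bar\rho$.

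Now I would close the argument using the two previously established lemmas. From $\dot\varphi_j = \kappa_\varphi(m_j - m_{j-1})$ and Lemma~\ref{lem_exp}, both $\dot\varphi_i$ and $\dot\varphi_{i+1}$ decay like $O(e^{-c_2 t})$, so the numerators in the quotient expression above decay exponentially. To make this decay survive division, the denominator $m_i$ must stay bounded away from zero; this is where Lemma~\ref{lem_col} (guaranteeing $m_i(t)>0$ for all $t$) combines with Lemma~\ref{lem_equ} (forcing $m_i(t)\to\bar m>0$) to provide some $T$ with $m_i(t) \ge \bar m/2$ for all $t\ge T$. Substituting these bounds yields $\|\dot c_{E_i}(t)\| = O(e^{-c_2 t}) \to 0$.

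The bulk of the work is routine once the reduction to $\dot\varphi_j$ is in place; the main piece of care I anticipate is handling the wraparound case $\varphi_{i+1} < \varphi_i$ in the definition of $E_i$, where $M_i$ splits as a sum of two iterated integrals. The Leibniz differentiation still produces exactly the same boundary terms at $\varphi_i$ and $\varphi_{i+1}$ (the internal endpoints at $0$ and $2\pi$ have zero derivative), so the bookkeeping is cosmetic and does not affect the bound.
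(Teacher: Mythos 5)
Your proof is correct and follows essentially the same route as the paper: write the centroid as the ratio of the moment to the mass, differentiate in the angular limits to reduce $\dot M_i$ and $\dot m_i$ to bounded multiples of $\dot\varphi_i$ and $\dot\varphi_{i+1}$ (which vanish by the earlier lemmas), and apply the quotient rule. You are in fact slightly more careful than the paper, which passes to the limit in the quotient without explicitly noting that the denominator $m_i(t)$ stays bounded away from zero; your use of Lemma~\ref{lem_col} and Lemma~\ref{lem_equ} to secure $m_i(t)\ge\bar m/2$ for large $t$ closes that small gap and additionally yields an exponential decay rate for $\dot c_{E_i}$.
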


\begin{proof}
It follows from Lemma \ref{lem_equ} and Equations (\ref{sys}) that $\lim_{t\rightarrow\infty}\dot{\varphi}_i(t)=0$, $i\in I_N$. In light of Equation (\ref{mi}), one has
$$
\dot{m}_i(t)=\omega(\varphi_{i+1})\dot{\varphi}_{i+1}(t)-\omega(\varphi_{i})\dot{\varphi}_{i}(t),
$$
which leads to $\lim_{t\to\infty}\dot{m}_i(t)=0$. For simplicity, introduce the following notation
\begin{equation*}
{m_{i,x}} = \left\{
{\begin{aligned}
&{\int_{{\varphi _i}}^{2\pi } \omega_x(\theta )d\theta  + \int_0^{{\varphi _{i + 1}}} \omega_x(\theta )d\theta ,}& \quad & if~{{\varphi _{i + 1}} < {\varphi _i}}\\
&{\int_{{\varphi _i}}^{{\varphi _{i + 1}}} \omega_x (\theta )d\theta .}& \quad & otherwise \\
\end{aligned}} \right.
\end{equation*}
and one has
$$
\dot{m}_{i,x}(t)=\omega_x(\varphi_{i+1})\dot{\varphi}_{i+1}(t)-\omega_x(\varphi_{i})\dot{\varphi}_{i}(t),
$$
which leads to $\lim_{t\to\infty}\dot{m}_{i,x}(t)=0$. It follows from $\lim_{t\to\infty}\dot{m}_i(t)=\lim_{t\to\infty}\dot{m}_{i,x}(t)=0$ that
$$
\lim_{t\to\infty}\dot{c}^x_{E_i}(t)=\lim_{t\to\infty}\frac{\dot{m}_{i,x}(t)\cdot m_i(t)-m_{i,x}(t)\cdot \dot{m}_i(t)}{\left(m_i(t)\right)^2}=0.
$$
Similarly, one can obtain $\lim_{t\to\infty}\dot{c}^y_{E_i}(t)=0$. Considering that $\dot{c}_{E_i}=(\dot{c}^x_{E_i},\dot{c}^y_{E_i})^T$, one gets $\lim_{t\to\infty}\dot{c}_{E_i}(t)={0}$, $i\in I_N$.
The proof is thus completed.
\end{proof}

\begin{lemma}\label{lem_phi}
$$
\lim_{t\rightarrow\infty}\varphi_i(t)=\varphi^{*}_i, \quad i\in I_N
$$
with the constant $\varphi^{*}_i$.
\end{lemma}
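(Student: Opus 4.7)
The plan is to reduce the convergence of $\varphi_i(t)$ to a question of absolute integrability of $\dot{\varphi}_i$, using the exponential decay result already established in Lemma~\ref{lem_exp}. Since $\dot{\varphi}_i = \kappa_\varphi(m_i - m_{i-1})$ by~(\ref{sys}), and Lemma~\ref{lem_exp} yields positive constants $c_1,c_2$ with $|m_i(t)-m_{i-1}(t)|\leq c_1 e^{-c_2 t}$ for all $t\geq 0$, I would immediately obtain the pointwise bound
\[
|\dot{\varphi}_i(t)| \leq \kappa_\varphi c_1\, e^{-c_2 t}, \qquad \forall\, t\geq 0,\ i\in I_N.
\]

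Next, I would observe that this bound gives absolute integrability on $[0,\infty)$, namely
\[
\int_0^{\infty} |\dot{\varphi}_i(s)|\, ds \ \leq\ \frac{\kappa_\varphi c_1}{c_2} \ <\ \infty.
\]
Consequently, for any $t_2>t_1\geq 0$,
\[
|\varphi_i(t_2)-\varphi_i(t_1)| \ \leq\ \int_{t_1}^{t_2} |\dot{\varphi}_i(s)|\, ds \ \leq\ \frac{\kappa_\varphi c_1}{c_2}\, e^{-c_2 t_1},
\]
so $\{\varphi_i(t)\}$ satisfies the Cauchy criterion as $t\to\infty$. By completeness, the limit $\varphi_i^{*}:=\lim_{t\to\infty}\varphi_i(t)$ exists and is a constant, which is the claimed conclusion.

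The only subtlety worth flagging concerns the modulo operation $\varphi_i = \mathrm{mod}(\varphi_i,2\pi)$ declared after~(\ref{sys}), which introduces jump discontinuities whenever a partition bar crosses the $0/2\pi$ boundary. I would handle this by arguing on the unwrapped (lifted) trajectory $\tilde{\varphi}_i(t)$, whose dynamics is identically~(\ref{sys}) and for which the estimate above shows the total variation on $[0,\infty)$ is finite; convergence of $\tilde{\varphi}_i(t)$ then descends to convergence on the circle, and one sets $\varphi_i^{*}:=\mathrm{mod}(\tilde{\varphi}_i^{*},2\pi)$. I expect this bookkeeping, rather than any analytic difficulty, to be the only obstacle, since the core estimate is an immediate consequence of Lemma~\ref{lem_exp}.
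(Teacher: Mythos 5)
Your proposal is correct and follows essentially the same route as the paper: both arguments rest on the exponential bound $|\dot{\varphi}_i(t)|\leq\kappa_{\varphi}c_1e^{-c_2t}$ from Lemma~\ref{lem_exp}, deduce absolute integrability of $\dot{\varphi}_i$ on $[0,\infty)$, and conclude convergence of $\varphi_i(t)$ (you via the Cauchy criterion, the paper via Apostol's theorem on improper Riemann integrals — an immaterial difference). Your remark on lifting the trajectory to handle the modulo operation is a welcome extra precaution that the paper's proof silently omits.
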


\begin{proof}
It follows from Lemma~\ref{lem_equ} and Equation (\ref{sys}) that
$$
\lim_{t\rightarrow\infty}\dot{\varphi}_i(t)=\lim_{t\rightarrow\infty}\kappa_{\varphi }\left[m_i(t)-m_{i-1}(t)\right]=0.
$$
In light of Lemma~\ref{lem_exp}, there exist positive constants $c_1$ and $c_2$ such that
$$
\dot{\varphi}_i(t)\leq|\dot{\varphi}_i(t)|=\kappa_{\varphi}|m_i(t)-m_{i-1}(t)|\leq\kappa_{\varphi}c_1 e^{-c_2t},
$$
which leads to
$$
\lim_{t\to\infty}{\varphi}_i(t)-{\varphi}_i(0)= \int_{0}^{+\infty}\dot{\varphi}_i(t)dt\leq\int_{0}^{+\infty}|\dot{\varphi}_i(t)|dt\leq\kappa_{\varphi}c_1 \int_{0}^{+\infty}e^{-c_2t}dt=\frac{\kappa_{\varphi}c_1}{c_2}<+\infty.
$$
According to Theorem $10.33$ in~\cite{apo74}, $\dot{\varphi}(t)$ is improper
Riemann-integrable on $[0,+\infty)$, and thus the improper integral $\int_{0}^{+\infty}\dot{\varphi}_i(t)dt$ exists, which implies the existence of $\lim_{t\to\infty}{\varphi}_i(t)$ because of the following equality
$$
\lim_{t\to\infty}{\varphi}_i(t)={\varphi}_i(0)+\int_{0}^{+\infty}\dot{\varphi}_i(t)dt.
$$
Therefore, $\varphi_i(t)$, $i\in I_N$ converges to the constant value $\varphi^{*}_i$ as time $t$ goes to the positive infinity.
\end{proof}

\begin{prop}\label{lem_solex}
For some $\varphi_i$, $i\in I_N$, there exist solutions to $\nabla_{{p}_i}J={0}$ on the domain
$\mathrm{D}=[\underline{a},\bar{a}]\times[\underline{b},\bar{b}]$ if
\begin{equation}\label{inner}
\langle\nabla_{{p}_i}J(\mathbf{A}\mathbf{z}+\mathbf{b}),\mathbf{p}\rangle>0, \quad \forall \mathbf{z}\in\partial I^2,
\end{equation}
where $\mathbf{z}=(p_x,p_y)$, $\mathbf{A}=\frac{1}{2}\text{diag}(\bar{a}-\underline{a},\bar{b}-\underline{b})$,
$\mathbf{b}=\frac{1}{2}(\bar{a}+\underline{a},\bar{b}+\underline{b})^T$, $I^2=[-1,1]\times[-1,1]$, and
$$
\nabla_{{p}_i}J=\int_{E_i(\varphi)}\frac{\partial f({p}_i,q)}{\partial {p}_i}\rho(q)dq
$$
denotes the gradient of cost function (\ref{cost}) with respect to ${p}_i$. In addition,
$\partial I^2$ denotes the boundary of $I^2$, and the symbol $\langle\cdot,\cdot\rangle$ represents the inner product
in Euclidean space.
\end{prop}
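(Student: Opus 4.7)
The plan is to establish the existence of a zero of $\nabla_{p_i}J$ inside $\mathrm{D}$ via a direct variational argument: hypothesis (\ref{inner}) will force the minimizer of $J$, viewed as a function of $p_i$ alone with $\varphi$ and $\{p_j\}_{j\neq i}$ frozen, to lie in the open interior of $\mathrm{D}$, at which point the first-order necessary condition yields $\nabla_{p_i}J=\mathbf{0}$.

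First I would freeze $\varphi$ and the other agent positions and view $p_i\mapsto J(\mathbf{p},\varphi)$ as a scalar function on the compact rectangle $\bar{\mathrm{D}}$. Continuity of $f(\cdot,q)$ in $p_i$ together with boundedness of $\rho$ on the bounded subregion $E_i(\varphi)$ gives, by dominated convergence and differentiation under the integral sign, that this restricted map is $C^1$ on $\bar{\mathrm{D}}$. The Weierstrass extreme value theorem then provides a global minimizer $p_i^{*}\in\bar{\mathrm{D}}$.

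The core step is to exclude $p_i^{*}\in\partial\mathrm{D}$. Assume for contradiction that $p_i^{*}\in\partial\mathrm{D}$ and set $\mathbf{z}^{*}=\mathbf{A}^{-1}(p_i^{*}-\mathbf{b})\in\partial I^2$. Hypothesis (\ref{inner}) gives $\langle\nabla_{p_i}J(p_i^{*}),\mathbf{z}^{*}\rangle>0$, so the one-sided directional derivative along $-\mathbf{z}^{*}$ equals $-\langle\nabla_{p_i}J(p_i^{*}),\mathbf{z}^{*}\rangle<0$. I then check that $-\mathbf{z}^{*}$ is an admissible direction: moving from $p_i^{*}$ by $-t\mathbf{z}^{*}$ shifts the pre-image to $(\mathbf{I}-t\mathbf{A}^{-1})\mathbf{z}^{*}$, and since $\mathbf{A}^{-1}$ is diagonal with strictly positive entries, each component of this new vector has strictly smaller magnitude than the corresponding component of $\mathbf{z}^{*}$ for all sufficiently small $t>0$, so it remains in $I^2$ and the image lies in $\bar{\mathrm{D}}$. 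Hence $J$ strictly decreases from $p_i^{*}$ along this admissible ray, contradicting minimality; therefore $p_i^{*}$ lies in the open interior of $\mathrm{D}$, and interior optimality gives $\nabla_{p_i}J(p_i^{*})=\mathbf{0}$.

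The most delicate step will be the admissibility verification, since hypothesis (\ref{inner}) couples the pre-image coordinate $\mathbf{z}$ with the original gradient $\nabla_{p_i}J$ rather than with the rescaled gradient $\mathbf{A}\nabla_{p_i}J$ that the chain rule naturally produces; the positivity and diagonality of $\mathbf{A}$ are precisely what make $-\mathbf{z}^{*}$ inward-pointing at every boundary point. A cleaner, purely topological alternative uses the Poincar\'e--Bohl homotopy $H(\mathbf{z},t)=(1-t)\nabla_{p_i}J(\mathbf{A}\mathbf{z}+\mathbf{b})+t\mathbf{z}$, which satisfies $\langle H(\mathbf{z},t),\mathbf{z}\rangle>0$ on $\partial I^2$ under (\ref{inner}) and thus identifies the Brouwer degree of $\nabla_{p_i}J$ on $\mathrm{D}$ with that of the identity, guaranteeing a zero.
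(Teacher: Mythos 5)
Your proposal is correct, but your primary argument takes a genuinely different route from the paper. The paper's proof is a one-step reduction: it rescales to $I^2$ by setting ${F}_i(\mathbf{z})=\nabla_{{p}_i}J(\mathbf{A}\mathbf{z}+\mathbf{b})$ and then directly invokes the generalization of Bolzano's theorem from Morales~\cite{mora02}, which asserts that $\langle {F}_i(\mathbf{z}),\mathbf{z}\rangle>0$ on $\partial I^2$ forces a zero of ${F}_i$ in $I^2$ --- this is precisely the Poincar\'e--Bohl/degree argument you sketch as your ``cleaner alternative'' at the end. Your main argument instead exploits the fact that $\nabla_{{p}_i}J$ is an exact gradient of the scalar potential ${p}_i\mapsto\int_{E_i}f({p}_i,q)\rho(q)dq$: Weierstrass gives a minimizer on the compact rectangle, and condition (\ref{inner}) rules out boundary minimizers because $-\mathbf{z}^{*}$ is an admissible descent direction (your verification that $(\mathbf{I}-t\mathbf{A}^{-1})\mathbf{z}^{*}$ stays in $I^2$ is correct, since $\mathbf{A}^{-1}$ is diagonal with positive entries; the only cosmetic slip is that a zero component of $\mathbf{z}^{*}$ stays equal rather than strictly shrinking, which does not affect the conclusion). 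The trade-off: your variational route is elementary and self-contained but relies on the vector field being a gradient, whereas the paper's topological route needs only continuity of $\nabla_{{p}_i}J$ and so would survive if the field were not conservative. You also implicitly corrected the typo in the statement of (\ref{inner}), where $\mathbf{p}$ should read $\mathbf{z}$, exactly as the paper's own proof does.
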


\begin{proof}
Construct a two-dimension continuous function of two variables as follows
$$
{F}_i(\mathbf{z})=\nabla_{{p}_i}J(\mathbf{A}\mathbf{z}+\mathbf{b}), \quad \mathbf{z}=(p_x,p_y) \in [-1,1]\times[-1,1].
$$
Then the existence of solutions to $\nabla_{{p}_i}J={0}$ on $\mathrm{D}=[\underline{a},\bar{a}]\times[\underline{b},\bar{b}]$ is guaranteed if solutions to ${F}_i(\mathbf{z})={0}$ are available on $I^2$. According to the generalization of Bolzano's Theorem~\cite{mora02}, there exist solutions to ${F}_i(\mathbf{z})={0}$ on $I^2$ if
$\langle{F}_i(\mathbf{z}),\mathbf{z}\rangle>0$, $\forall\mathbf{z}\in\partial I^2$, which is equivalent to $\langle\nabla_{{p}_i}J(\mathbf{A}\mathbf{z}+\mathbf{b}),\mathbf{z}\rangle>0$, $\forall \mathbf{z}\in\partial I^2$. The proof is thus completed.
\end{proof}

\begin{remark}
The subregion $E_i$ can be decomposed into a series of squares $\mathrm{D}_k=[\underline{a}_k,\bar{a}_k]\times[\underline{b}_k,\bar{b}_k]$ satisfying $\bigcup_kD_k\subseteq E_i$. Then the existence of solutions to $\nabla_{{p}_i}J={0}$ on each $D_k$ can be identified with
the inequality (\ref{inner}).
\end{remark}


\begin{theorem}\label{theo1}
For multi-agent dynamics (\ref{dp}) with control input
\begin{equation}\label{u_star}
{u}_i=-\kappa_p\cdot({p}_i-{p}_i^{*})
\end{equation}
with ${p}_i^{*}=\arg\inf_{{p}_i\in\Pi_i}J_{E_i}$ and $\Pi_i=\{{p}_i\in E_i|\nabla_{{p}_i}J={0}\}\bigcup\partial E_i$
and partition dynamics (\ref{sys}), the following claims hold.  \\
1) If $\text{rank}[\mathbf{H}_{J,p_i}(p_i^{*})]=2$, $\lim_{t\to\infty}\dot{p}^{*}_i(t)={0}$, $\forall~i\in I_N$; \\
2) If $\text{rank}[\mathbf{H}_{J,p_i}(p_i^{*})]=2$, $\lim_{t\to\infty}\left\|\nabla_{{p}_i}J\right\|=0$, $\forall~i\in I_N$; \\
3) Equitable workload partitions can be achieved with exponential convergence rate.
\end{theorem}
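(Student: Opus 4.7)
The plan is to dispatch the three claims in roughly the order $(3) \to (1) \to (2)$, since each builds on the previous. Claim 3 falls out essentially for free from Lemma~\ref{lem_exp}: that lemma already shows $|m_i(t) - m_{i-1}(t)| \le c_1 e^{-c_2 t}$, which is exactly the statement that the workload equalization in Lemma~\ref{lem_equ} happens at an exponential rate, so I would simply cite the two lemmas and conclude.

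For Claim 1, the idea is to regard $p_i^*$ as a function of the phase vector $\varphi$ via the defining equation $\nabla_{p_i} J(p_i^*(\varphi), \varphi) = \mathbf{0}$. Under the rank condition $\text{rank}[\mathbf{H}_{J,p_i}(p_i^*)] = 2$, the Hessian is invertible, so by the Implicit Function Theorem one gets a local $C^1$ map $\varphi \mapsto p_i^*(\varphi)$ with bounded Jacobian $-\mathbf{H}_{J,p_i}^{-1} \, \partial(\nabla_{p_i}J)/\partial \varphi$. The chain rule then gives $\dot{p}_i^*(t) = (\partial p_i^*/\partial \varphi)\,\dot{\varphi}(t)$. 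Since $\dot{\varphi}_i = \kappa_\varphi(m_i - m_{i-1})$ decays exponentially by Lemma~\ref{lem_exp}, it follows that $\dot{p}_i^*(t) \to \mathbf{0}$.

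For Claim 2, I would introduce the tracking error $e_i = p_i - p_i^*$ and use the control law to write the error dynamics as $\dot{e}_i = -\kappa_p e_i - \dot{p}_i^*$, a stable linear system perturbed by the vanishing signal $\dot{p}_i^*$. Taking the Lyapunov candidate $V_e = \tfrac{1}{2}\|e_i\|^2$ and applying Young's inequality to the cross term yields
\begin{equation*}
\dot{V}_e \;\le\; -\kappa_p V_e + \frac{1}{2\kappa_p}\|\dot{p}_i^*\|^2,
\end{equation*}
from which a standard input-to-state argument, combined with $\|\dot{p}_i^*\| \to 0$ from Claim 1, gives $e_i(t) \to \mathbf{0}$, i.e.\ $p_i(t) \to p_i^*(t)$. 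Because the rank condition forces $p_i^*$ to be an interior critical point (not a boundary element of $\Pi_i$), we have $\nabla_{p_i}J(p_i^*) = \mathbf{0}$, so continuity of the gradient delivers $\|\nabla_{p_i}J(p_i(t))\| \to 0$.

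The main obstacle will be making the Implicit Function Theorem step used in Claim 1 hold along the entire trajectory rather than only locally: one must justify that $p_i^*(t)$ remains in the interior of $E_i(\varphi(t))$ where the gradient condition applies, and that the rank-$2$ Hessian condition persists in a neighborhood as $\varphi$ evolves. Here Lemma~\ref{lem_phi} is crucial, since it guarantees $\varphi(t) \to \varphi^*$, so the moving subregion $E_i(\varphi(t))$ eventually stabilizes and the implicit map $\varphi \mapsto p_i^*(\varphi)$ is defined on a fixed neighborhood of $\varphi^*$; a secondary subtlety is verifying that Lemma~\ref{lem_col} (collision avoidance) rules out degeneration of $E_i$ so that $p_i^*$ cannot be pushed onto $\partial E_i$ by the partition dynamics.
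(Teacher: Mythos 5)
Your proposal is correct and its skeleton matches the paper's. Claim 3 is exactly the exponential Lyapunov estimate already contained in Lemmas~\ref{lem_equ}--\ref{lem_exp}, and your Claim 1 is the paper's argument in different clothing: the paper differentiates the identity $\nabla_{p_i}J(p_i^{*})=\mathbf{0}$ along the trajectory, which is precisely your implicit-function-theorem chain rule; both versions need the full-rank Hessian to invert, boundedness of $\partial(\nabla_{p_i}J)/\partial\varphi_i$, the vanishing of $\dot{\varphi}_i$, and Lemma~\ref{lem_phi} to stabilize the subregion. The one genuine divergence is Claim 2. The paper solves $\dot{p}_i+\kappa_p p_i=\kappa_p p_i^{*}(t)$ explicitly by variation of parameters, integrates by parts, and uses L'Hospital's rule to show the convolution term tends to $\dot{p}_i^{*}/\kappa_p\to\mathbf{0}$; you instead write the error dynamics $\dot{e}_i=-\kappa_p e_i-\dot{p}_i^{*}$ and run a Young's-inequality/ISS argument on $\tfrac{1}{2}\|e_i\|^2$. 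The two are equivalent in content---both reduce Claim 2 to ``a stable linear system driven by a vanishing input has vanishing state'' and both feed on Claim 1---but your route is arguably cleaner: the paper's L'Hospital step is applied componentwise to a vector-valued Stieltjes integral and, as written, cites Lemma~\ref{dce} (the centroidal special case) where Claim 1 is what is actually needed in the general setting. One shared caveat: both you and the paper tacitly assume $\nabla_{p_i}J(p_i^{*})=\mathbf{0}$, i.e., that the minimizer over $\Pi_i=\{\nabla_{p_i}J=\mathbf{0}\}\cup\partial E_i$ is an interior critical point; the rank condition alone does not exclude a boundary minimizer. You at least flag this explicitly, and your remark that Lemma~\ref{lem_col} prevents degeneration of $E_i$ is a sensible first step toward closing that gap, though neither your sketch nor the paper fully does so.
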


\begin{proof}
For Claim 1), it follows from $\nabla_{{p}_i}J(p_i^{*})={0}$ that
$$
\frac{d}{dt}\nabla_{{p}_i}J(p_i^{*})=\mathbf{H}_{J,p_i}(p_i^{*})\cdot\dot{p}_i^{*}+\frac{\partial\nabla_{{p}_i}J(p_i^{*})}{\partial\varphi_{i+1}}\dot{\varphi}_{i+1}
+\frac{\partial\nabla_{{p}_i}J(p_i^{*})}{\partial\varphi_{i}}\dot{\varphi}_{i}={0}.
$$
In light of Lemma~\ref{lem_equ} and Equation (\ref{sys}), one obtains $\lim_{t\to+\infty}\dot{\varphi}_i(t)=0$, $i\in I_N$. Considering that $\frac{\partial\nabla_{{p}_i}J(p_i^{*})}{\partial\varphi_{i+1}}$ and $\frac{\partial\nabla_{{p}_i}J(p_i^{*})}{\partial\varphi_{i}}$ are bounded, this allows to get
$$
\lim_{t\to\infty}\mathbf{H}_{J,p_i}(p_i^{*})\cdot\dot{p}_i^{*}(t)={0}.
$$
In addition, according to the implicit function theorem, if $\mathbf{H}_{J,p_i}(p_i^{*},\varphi_i)$ has full rank, there exists a unique continuously differentiable function $p_i^{*}(\varphi_i)$ such that $\nabla_{{p}_i}J(p_i^{*}(\varphi_i),\varphi_i)={0}$. Then it follows from Lemma~\ref{lem_phi} that $\lim_{t\to\infty}p_i^{*}(t)=\lim_{\varphi_i\to\varphi_i^{*}}p_i^{*}(\varphi_i)=p_{i,\infty}^{*}$. Therefore, if $\mathbf{H}_{J,p_i}(p_{i,\infty}^{*})$ has the full rank (i.e., $\text{rank}[\mathbf{H}_{J,p_i}(p_{i,\infty}^{*})]=2$), one gets $\lim_{t\to\infty}\dot{p}^{*}_i(t)=0$. \\
For Claim 2), by substituting (\ref{dp}) into (\ref{u_star}), one obtains
\[\dot{p}_i+\kappa_p{p_i} = {\kappa_p}{p_i^{*}(t)}, \quad i\in I_N \]
Solving the above differential equation yields
\[{p_i}(t) = {p_i}\left( 0 \right){e^{ - {\kappa_p}t}} + {e^{ - {\kappa_p}t}}\int_{0}^{t} {{\kappa_p}p_i^{*}(\tau){e^{{\kappa_p}\tau}}d\tau}.\]
Applying partial integration to the integral term of the above equation allows to get
\[{p_i}(t) = \left({p_i}\left( 0 \right)- p_i^{*}(0)\right){e^{-{\kappa _p}t}}+p_i^{*}(t)-{e^{-{\kappa_p}t}}\int_{0}^{t}{{e^{{\kappa_p}\tau}}d{p_i^{*}(\tau)}}.\]
By using the L'Hospital's rule for the last term of the above equation, one gets
\begin{equation*}
\begin{split}
\mathop {\lim }\limits_{t \to \infty } {e^{-{\kappa _p}t}}\int_{0}^{t}{{e^{{\kappa_p}\tau}}d{p_i^{*}(\tau)}}  & = \mathop {\lim }\limits_{t \to \infty } \frac{{\int_{0}^{t} {{e^{{\kappa _p}\tau}}d{p_i^{*}(\tau)}} }}{{{e^{{\kappa_p}t}}}} \\
& = \mathop {\lim }\limits_{t \to \infty } \frac{{{e^{{\kappa _p}t}}{{\dot p}_i^{*}}}(t)}{{{\kappa_p}{e^{{\kappa_p}t}}}} \\
& = \mathop {\lim }\limits_{t \to \infty } \frac{{{{\dot p}_i^{*}}}(t)}{{{\kappa_p}}}.
\end{split}
\end{equation*}
According to Lemma \ref{dce}, this leads to
$$
\mathop {\lim }\limits_{t \to \infty } {e^{-{\kappa _p}t}}\int_{0}^{t}{{e^{{\kappa _p}\tau}}d{p_i^{*}(\tau)}}=0.
$$
Therefore, it follows that
\begin{equation*}
\begin{split}
\lim_{t\to\infty}{p_i}(t)&=\lim_{t\to\infty}\left({p_i}\left( 0 \right)- p_i^{*}(0)\right){e^{-{\kappa _p}t}}+\lim_{t\to\infty}p_i^{*}(t)-\lim_{t\to\infty}{e^{-{\kappa_p}t}}\int_{0}^{t}{{e^{{\kappa _p}\tau}}d{p_i^{*}(\tau)}}\\
&=\lim_{t\to\infty}p_i^{*}(t)= p_{i,\infty}^{*}, \quad i \in I_N,
\end{split}
\end{equation*}
which indicates $\lim_{t\to\infty}\left\|\nabla_{{p}_i}J(p_i(t))\right\|=\left\|\nabla_{{p}_i}J(p_{i,\infty}^{*})\right\|=0$. \\
For Claim 3), it follows from Lemma~\ref{lem_equ}, Lemma~\ref{lem_comp} and Lemma~\ref{lem_exp} that
$$
V(\mathbf{\varphi})=\frac{1}{2}\sum\limits_{i=1}^N\left(m_i-\bar{m}\right)^2\leq \frac{N\sum_{i=1}^{N}(m_i-m_{i-1})^2}{2\lambda_{\min}(S)}\leq \frac{N^2c^2_1}{2\lambda_{\min}(S)}\cdot e^{-2c_2t}
$$
with the positive constants $c_1$ and $c_2$, which implies that $V(\mathbf{\varphi})$ exponentially converges to zero as time goes to the infinity. Thus,
each sub-region exponentially gains equal workload as time approaches the infinity. This completes the proof.
\end{proof}

\begin{remark}
Compared with the Voronoi-like partitions, the rotational bar partition is more suitable for irregular non-convex regions with load
balancing, and it is not necessary to spend time in constructing the Voronoi diagrams and updating multiple boundaries of polygons.
\end{remark}

\begin{lemma}\label{lem_fun}
There exists a differentiable function $\xi:[0,2\pi]\rightarrow[0,2\pi]$ such that
$$
\int_{\varphi}^{\xi(\varphi)}\omega(\theta)d\theta=\bar{m}, \quad \forall \varphi\in[0,2\pi]
$$
with $\omega(\theta)=\int_{r_{in}(\theta)}^{r_{out}(\theta)}\rho(r,\theta)rdr$.
\end{lemma}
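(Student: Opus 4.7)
The plan is to construct $\xi$ explicitly as the composition of the cumulative-workload integral with a constant shift, and then verify differentiability piecewise. First I would introduce the antiderivative
\[
M(\varphi) = \int_0^{\varphi}\omega(\theta)\,d\theta, \qquad \varphi\in[0,2\pi],
\]
and note that $M(2\pi) = N\bar{m}$. Because $\omega(\theta) = \int_{r_{in}(\theta)}^{r_{out}(\theta)}\rho(r,\theta)\,r\,dr$ is continuous (the boundary curves $r_{in},r_{out}$ and the density $\rho$ are continuous) and strictly positive (recall $\omega_{\min}>0$ was already used in Lemma~\ref{lem_col}), $M$ is a $C^1$ diffeomorphism from $[0,2\pi]$ onto $[0,N\bar{m}]$ whose inverse $M^{-1}$ is $C^1$ with $(M^{-1})'(y)=1/\omega(M^{-1}(y))$.

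Second, I would define $\xi$ by
\[
\xi(\varphi) = \begin{cases} M^{-1}\bigl(M(\varphi)+\bar{m}\bigr), & M(\varphi)+\bar{m}\le N\bar{m},\\[2pt] M^{-1}\bigl(M(\varphi)+\bar{m}-N\bar{m}\bigr), & \text{otherwise.} \end{cases}
\]
On the first branch, $\int_{\varphi}^{\xi(\varphi)}\omega\,d\theta = M(\xi(\varphi))-M(\varphi) = \bar{m}$ by the fundamental theorem of calculus. On the second branch, using the wrap-around convention already employed in (\ref{mi}),
\[
\int_{\varphi}^{\xi(\varphi)}\omega\,d\theta \;=\; \int_{\varphi}^{2\pi}\omega\,d\theta + \int_{0}^{\xi(\varphi)}\omega\,d\theta \;=\; \bigl(N\bar{m}-M(\varphi)\bigr) + M(\xi(\varphi)) \;=\; \bar{m}
\]
by the choice of $\xi(\varphi)$.

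Third, for the regularity claim, each branch is a composition of $C^1$ maps and a chain-rule computation gives $\xi'(\varphi) = \omega(\varphi)/\omega(\xi(\varphi))$, which is continuous since $\omega$ is continuous and bounded below by $\omega_{\min}>0$. The only subtle point is the transition value $\varphi_0$ for which $M(\varphi_0)+\bar{m}=N\bar{m}$: there the first formula returns $2\pi$ while the second returns $0$. Identifying $0$ and $2\pi$ via the circle structure adopted throughout the paper (the modulo convention $\varphi_i=\mathrm{mod}(\varphi_i,2\pi)$), the two pieces glue smoothly because both return the common derivative $\omega(\varphi_0)/\omega(0)=\omega(\varphi_0)/\omega(2\pi)$ by the natural $2\pi$-periodic extension of $\omega$.

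The main obstacle is precisely this gluing at the wrap-around point. To make the argument fully rigorous I would either (i) reinterpret the codomain of $\xi$ as the circle $\mathbb{R}/2\pi\mathbb{Z}$ used implicitly elsewhere in the paper, or (ii) extend $\omega$ to a $2\pi$-periodic function on $\mathbb{R}$, observe that the unified formula $\tilde{\xi}(\varphi)=M^{-1}(M(\varphi)+\bar{m})$ is smooth on $\mathbb{R}$, and recover $\xi$ by reduction modulo $2\pi$. In both viewpoints the strict positivity of $\omega$ is indispensable: it is what makes $M$ invertible and what keeps $\xi'$ finite everywhere.
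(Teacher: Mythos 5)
Your proposal is correct, but it takes a genuinely different route from the paper. The paper argues existence via the intermediate value theorem applied to $G(\xi)=\int_{\varphi}^{\xi}\omega(\theta)d\theta-\bar{m}$ on $[\varphi,\varphi+2\pi]$, then proves continuity by deriving the Lipschitz bound $|\xi(\varphi)-\xi(\varphi_0)|\leq\tfrac{\bar{\omega}}{\underline{\omega}}|\varphi-\varphi_0|$ from the identity $\int_{\varphi_0}^{\varphi}\omega=\int_{\xi(\varphi_0)}^{\xi(\varphi)}\omega$, and finally obtains $\xi'(\varphi_0)=\omega(\varphi_0)/\omega(\xi(\varphi_0))$ via the mean value theorem for definite integrals. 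You instead build $\xi$ explicitly by inverting the cumulative workload $M(\varphi)=\int_0^{\varphi}\omega$, which is legitimate because $\omega\geq\omega_{\min}>0$ makes $M$ a strictly increasing $C^1$ bijection, and the chain rule then delivers the same derivative formula in one line. Your construction is more transparent and handles existence, uniqueness, and regularity simultaneously; it also makes explicit the wrap-around gluing at the point where $M(\varphi)+\bar{m}=N\bar{m}$, a subtlety the paper silently absorbs by identifying $[\varphi,\varphi+2\pi]$ with $[0,2\pi]$ (both proofs really produce a circle-valued map, and your suggested fix via the periodic extension of $\omega$ or the quotient $\mathbb{R}/2\pi\mathbb{Z}$ is the right repair for either argument). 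One thing the paper's route buys that yours does not record: the intermediate Lipschitz inequality (\ref{nest}) is cited again in the error analysis of the search algorithm (Theorem 3.2), so if you adopt your construction you should still note that $|\xi(\varphi)-\xi(\varphi_0)|=|M^{-1}(M(\varphi)+\bar{m})-M^{-1}(M(\varphi_0)+\bar{m})|\leq\tfrac{1}{\underline{\omega}}|M(\varphi)-M(\varphi_0)|\leq\tfrac{\bar{\omega}}{\underline{\omega}}|\varphi-\varphi_0|$ follows immediately from $(M^{-1})'=1/\omega\circ M^{-1}\leq 1/\underline{\omega}$ and $M'=\omega\leq\bar{\omega}$.
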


\begin{proof}
First of all, one proves the existence of function $\xi:[0,2\pi]\rightarrow[0,2\pi]$ with the aforementioned equality constraint. Construct a function $G(\xi)=\int_{\varphi}^{\xi}\omega(\theta)d\theta-\bar{m}$, $\varphi\in[0,2\pi]$ and one gets $G(\varphi)=-\bar{m}<0$ and $G(\varphi+2\pi)=(N-1)\bar{m}>0$. It follows from the intermediate value theorem  that there exists a value $\xi_{\varphi}\in [\varphi,\varphi+2\pi]=[0,2\pi]$ such that $G(\xi_{\varphi})=0$, which is equivalent to $\int_{\varphi}^{\xi_{\varphi}}\omega(\theta)d\theta=\bar{m}$. This implies that there exists a function $\xi:[0,2\pi]\rightarrow[0,2\pi]$ satisfying the equality constraint. Next, one proves the continuity of function $\xi:[0,2\pi]\rightarrow[0,2\pi]$.  For any given $\varphi_0\in [0.2\pi]$, one obtains
$$
\int_{\varphi_0}^{\xi(\varphi_0)}\omega(\theta)d\theta=\int_{\varphi_0}^{\varphi}\omega(\theta)d\theta+\int_{\varphi}^{\xi(\varphi_0)}\omega(\theta)d\theta=\int_{\varphi}^{\xi(\varphi)}\omega(\theta)d\theta=\bar{m},
$$
which leads to
\begin{equation*}
\begin{split}
\int_{\varphi_0}^{\varphi}\omega(\theta)d\theta=\int_{\varphi}^{\xi(\varphi)}\omega(\theta)d\theta-\int_{\varphi}^{\xi(\varphi_0)}\omega(\theta)d\theta=\int_{\xi(\varphi_0)}^{\xi(\varphi)}\omega(\theta)d\theta
\end{split}
\end{equation*}
Thus, one can get
$$
\underline{\omega}|\xi(\varphi)-\xi(\varphi_0)|\leq\left|\int_{\xi(\varphi_0)}^{\xi(\varphi)}\omega(\theta)d\theta\right|=\left|\int_{\varphi_0}^{\varphi}\omega(\theta)d\theta\right|\leq\bar{\omega}|\varphi-\varphi_0|,
$$
which implies
\begin{equation}\label{nest}
|\xi(\varphi)-\xi(\varphi_0)|\leq\frac{\bar{\omega}}{\underline{\omega}}|\varphi-\varphi_0|.
\end{equation}
Then it concludes that for all $\epsilon>0$, there exists $\delta=\frac{\underline{\omega}\epsilon}{\bar{\omega}}>0$ such that $|\xi(\varphi)-\xi(\varphi_0)|\leq\epsilon$ whenever $|\varphi-\varphi_0|\leq\delta$. According to $\delta$-$\epsilon$ definition of limit, one can get $\lim_{\varphi\to\varphi_0}\xi(\varphi)=\xi(\varphi_0)$, which indicates the continuity of function $\xi:[0,2\pi]\rightarrow[0,2\pi]$. Finally, one proves the differentiability of function $\xi:[0,2\pi]\rightarrow[0,2\pi]$. For any given $\varphi_0\in [0.2\pi]$, by setting $\varphi=\varphi_0+h$, one can get
$$
\int_{\xi(\varphi_0)}^{\xi(\varphi_0+h)}\omega(\theta)d\theta=\int_{\varphi_0}^{\varphi_0+h}\omega(\theta)d\theta,
$$
which leads to $[\xi(\varphi_0+h)-\xi(\varphi_0)]\cdot\omega(\xi_h)=h\cdot\omega(\varphi_h)$ with $\xi_h\in[\xi(\varphi_0),\xi(\varphi_0+h)]$ and $\varphi_h\in[\varphi_0,\varphi_0+h]$ according to mean value theorems for definite integrals. Thus, it follows from the  continuity of functions $\xi(\varphi)$ and $\omega(\theta)$ that
$$
\lim_{h\to0}\frac{\xi(\varphi_0+h)-\xi(\varphi_0)}{h}=\lim_{h\to0}\frac{\omega(\varphi_h)}{\omega(\xi_h)}=\frac{\omega(\varphi_0)}{\omega(\xi(\varphi_0))}.
$$
The proof is thus completed.
\end{proof}

\begin{prop}\label{prop_opt}
There exist optimal solutions to Problem (\ref{min_cost}).
\end{prop}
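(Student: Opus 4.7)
The plan is to reduce the existence question to a direct application of the Weierstrass extreme value theorem: if the feasible domain of $(\mathbf{\varphi},\mathbf{p})$ is compact and $J$ is continuous on it, then the infimum in (\ref{min_cost}) is attained.

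First, I would set up the domain. Each phase $\varphi_i$ lives naturally on the circle $S^1\cong[0,2\pi]/\{0\sim 2\pi\}$ because of the modulo convention introduced after (\ref{sys}); consequently $\mathbf{\varphi}$ ranges over the $N$-torus $(S^1)^N$, which is compact. The agent positions $\mathbf{p}$ lie in $\Omega^N$, and $\Omega=\{(r,\theta):r_{in}(\theta)\le r\le r_{out}(\theta)\}$ is a closed and bounded subset of $\mathbb{R}^2$, being sandwiched between two continuous closed curves on $[0,2\pi]$; hence $\Omega^N$ is compact too. Thus the feasible set $(S^1)^N\times\Omega^N$ is compact.

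Next I would establish continuity of $J$. Rewriting a single summand in polar coordinates gives
\begin{equation*}
J_i(\mathbf{\varphi},p_i)=\int_{E_i(\mathbf{\varphi})}f(p_i,q)\rho(q)\,dq=\int_{\varphi_i}^{\varphi_{i+1}}\!\!\int_{r_{in}(\theta)}^{r_{out}(\theta)} f(p_i,(r,\theta))\rho(r,\theta)\,r\,dr\,d\theta,
\end{equation*}
with the wrap-around case $\varphi_{i+1}<\varphi_i$ handled by splitting the outer integral at $2\pi$ exactly as in (\ref{mi}). Continuity in $p_i$ follows from the continuity of $f(\cdot,q)$ together with the dominated convergence theorem, using the uniform bound $|f(p_i,q)\rho(q)|\le\bar{\rho}\sup_{\Omega\times\Omega}|f|$ (finite because $f$ is continuous on the compact set $\Omega\times\Omega$). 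Continuity in $\mathbf{\varphi}$ reduces to continuity of the integral with respect to its limits: the inner integral $\omega_f(\theta,p_i):=\int_{r_{in}(\theta)}^{r_{out}(\theta)}f(p_i,(r,\theta))\rho(r,\theta)\,r\,dr$ is a bounded (in fact continuous) function of $\theta$, so the outer integral is absolutely continuous in its limits. Joint continuity in $(\mathbf{\varphi},p_i)$ then follows from the estimate
\begin{equation*}
|J_i(\mathbf{\varphi},p_i)-J_i(\mathbf{\varphi}',p_i')|\le|J_i(\mathbf{\varphi},p_i)-J_i(\mathbf{\varphi},p_i')|+|J_i(\mathbf{\varphi},p_i')-J_i(\mathbf{\varphi}',p_i')|,
\end{equation*}
where the first term is controlled by uniform continuity of $f$ on $\Omega\times\Omega$ and the second by the bound $|J_i(\mathbf{\varphi},p_i')-J_i(\mathbf{\varphi}',p_i')|\le \sup_\theta|\omega_f(\theta,p_i')|\cdot(|\varphi_i-\varphi_i'|+|\varphi_{i+1}-\varphi_{i+1}'|)$ measured on the circle. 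Summing over $i$ yields continuity of $J$ on the compact domain.

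Finally, by the Weierstrass extreme value theorem the continuous function $J$ attains its minimum on the compact set $(S^1)^N\times\Omega^N$, producing an optimal pair $(\mathbf{\varphi}^\star,\mathbf{p}^\star)$ for Problem (\ref{min_cost}). I expect the main technical nuisance to be handling the circular topology cleanly across the wrap-around $\varphi_{i+1}<\varphi_i$, since the formula for $E_i$ changes form there; the remedy is to view $\varphi_i$ as a point on $S^1$ and measure differences via the geodesic distance on the circle, which makes $J$ patently continuous on the full torus.
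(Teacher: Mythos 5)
Your overall strategy---compactness of the configuration space plus continuity of $J$ plus the extreme value theorem---is the same engine the paper uses, but there is a genuine flaw in the continuity step. The cost $J$ is \emph{not} continuous on the full torus $(S^1)^N\times\Omega^N$: at a configuration where two adjacent bars coincide, $\varphi_{i+1}=\varphi_i$, the subregion $E_i$ jumps between (essentially) the empty set and (essentially) all of $\Omega$ depending on the side from which $\varphi_{i+1}$ approaches $\varphi_i$, because the defining formula for $E_i$ switches between the wrap-around branch and the ordinary branch there. Concretely, for $N=2$ the two one-sided limits of $J$ as $\varphi_2\to\varphi_1$ are $\int_\Omega f(p_2,q)\rho(q)\,dq$ and $\int_\Omega f(p_1,q)\rho(q)\,dq$, which differ whenever $p_1\neq p_2$. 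So the geodesic-distance remark does not rescue continuity, your Lipschitz-type bound $|J_i(\mathbf{\varphi},p_i')-J_i(\mathbf{\varphi}',p_i')|\le \sup_\theta|\omega_f|\cdot(|\varphi_i-\varphi_i'|+|\varphi_{i+1}-\varphi_{i+1}'|)$ fails across the collision set, and Weierstrass cannot be applied on the whole torus as stated. The fix is to restrict to the compact set of cyclically ordered configurations $\varphi_1\le\varphi_2\le\cdots\le\varphi_N\le\varphi_1+2\pi$ (interpreted mod $2\pi$)---this is the set actually preserved by the partition dynamics, cf.\ Lemma~\ref{lem_col}---on which your estimates do go through; alternatively, decompose the torus into the finitely many closed order cells, minimize cell by cell, and take the smallest of the cell minima.

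It is also worth noting that the paper sidesteps this issue by a different reduction: it invokes Lemma~\ref{lem_fun} to restrict attention to equitable partitions, writes $\varphi_i=\xi^{i-1}(\varphi_1)$, and minimizes the resulting function of the single scalar $\varphi_1\in[\varphi_1^{*},\varphi_2^{*}]$ together with $p_i\in E_i$, so the ordering of the bars is automatic and only one phase variable remains. Your version, once patched as above, proves existence for the unconstrained problem (\ref{min_cost}) over all ordered partitions, which is a stronger (and arguably more faithful) reading of the statement, at the price of the bookkeeping described in the previous paragraph.
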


\begin{proof}\
Note that there exists a continuous function $\xi:[0,2\pi]\rightarrow[0,2\pi]$ such that $\int_{\varphi}^{\xi(\varphi)}\omega(\theta)d\theta=\bar{m}$, $\forall \varphi\in[0,2\pi]$. Thus, one has $\varphi_i=\xi(\varphi_{i-1})$, $i\in I_N$ when equitable workload partition is completed. Then the cost function (\ref{cost}) can be rewritten as
$$
J(\mathbf{\varphi},\mathbf{p})=\sum_{i=1}^N\int_{\varphi_i}^{\xi(\varphi_i)}\int_{r_{in}(\theta)}^{r_{out}(\theta)}f({p}_i,r,\theta)\rho(r,\theta)rdrd\theta
=\sum_{i=1}^N\int_{\xi^{i-1}(\varphi_1)}^{\xi^i(\varphi_1)}\int_{r_{in}(\theta)}^{r_{out}(\theta)}f({p}_i,r,\theta)\rho(r,\theta)rdrd\theta,
$$
where $\xi^i$ denotes the function $\xi\circ\xi\cdot\cdot\cdot\xi$ with $i$ compositions, $\varphi_1\in[\varphi^{*}_1,\varphi^{*}_2]$ and ${p}_i\in E_i$, $\forall i\in I_N$.
Since $\xi(\varphi_1)$ is a continuous function, $\xi^i(\varphi_1)$ is continuous as well. Then $J(\mathbf{\varphi},\mathbf{p})=J(\mathbf{\varphi_1},\mathbf{p})$ is continuous with respect to $\varphi_1$ and ${p}_i$, and $[\varphi^{*}_1,\varphi^{*}_2]$ and $E_i$, $i\in I_N$ are all closed and bounded domains (i.e. compact set). This implies the existence of optimal solutions to the problem (\ref{min_cost}) according to the extreme value theorem.
\end{proof}

\begin{figure}[t!]
\centering
\includegraphics[width=12cm]{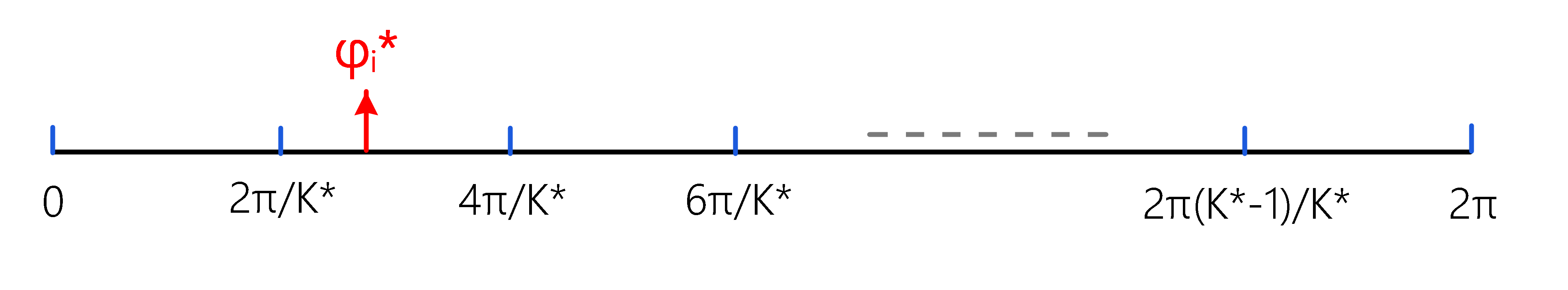}
\caption{Equal partition of phase angles on $[0,2\pi]$.}\label{angle}
\end{figure}


\begin{algorithm}[t!]
\caption{\label{tab:cam} Circular Search-and-Monitoring Algorithm}
\hspace*{0.02in} {\bf Initialize:}  $K^{*}$, $T_{\epsilon}$, $p_i$, $\varphi_i$, $\kappa_{\varphi}$, $\kappa_p$ \\
\hspace*{0.02in} For $i \in I_N$, the $i$-th agent performs as follows
\begin{algorithmic}[1]
\For{$k=1:K^{*}$}
\State Set $t=0$
\While{$t<T_{\epsilon}$}
    \If{$i=\min_{l_k\in\Lambda_k}l_k$}
       \State Set $\varphi_i=2\pi(k-1)/K^{*}$
       \State Update ${p}_i$ with (\ref{dp}) and (\ref{u_star})
    \Else
       \State Update $\varphi_i$ with (\ref{sys})
       \State Update ${p}_i$ with (\ref{dp}) and (\ref{u_star})
    \EndIf \State {\bf end~if}
\EndWhile \State {\bf end~while}
\State Save ${C}^k_i=(\varphi^k_i,{p}^k_i,J^k_{E_i})$ and $\Omega^k_i=\{J^k_{E_i}\}$
\State Receive $\Omega^k_{i-1}$ from $(i-1)$-th agent
\While{$|\Omega^k_i|\neq|\Omega^k_i\bigcup\Omega^k_{i-1}|$}
\State Update $\Omega^k_i=\Omega^k_i\bigcup\Omega^k_{i-1}$
\State Send $\Omega^k_i$ to $(i+1)$-th agent
\State Receive $\Omega^k_{i-1}$ from $(i-1)$-th agent
\EndWhile \State {\bf end~while}
\State Compute $J_i^k=\sum_{\Omega^k_i}J^k_{E_i}$
\EndFor \State {\bf end~for}
\State Select $k^{*}=\arg\min_{1\leq k\leq K^{*}}J_i^k$
\State Finalize $\varphi_i$ and ${p}_i$ with ${C}^{k^{*}}_i$
\end{algorithmic}
\end{algorithm}
Note that ${p}^{*}_i$ is dependent on $\varphi_{i-1}$ and $\varphi_i$, and it is ultimately dependent on $\varphi_{1}$ due to $\varphi_i=\xi(\varphi_{i-1})=\xi^{i-1}(\varphi_{1})$. As a result, Problem (\ref{min_cost}) can be equivalently converted into
$\min_{\varphi_1\in[0,2\pi]} J(\varphi_1,\mathbf{p}^{*}(\varphi_1))$. Define $K^{*}=\arg\inf_{k\in Z^{+}}\{k>0|{2\pi}/{k}\leq\epsilon_p\}$, then the interval $[0,2\pi]$ can be evenly divided into $K^{*}$ sub-intervals, i.e. $[0,2\pi]=\bigcup_{k=1}^{K^{*}}[2\pi(k-1)/K^{*},2\pi k/K^{*}]$, as shown in Fig.~\ref{angle}. The index set $\Lambda_k=\{l_k\in Z^{+}| l_k=\arg\inf_{i\in I_N}|\varphi_i-(k-1)2\pi/K^{*}|\}$ is introduced to identify $\varphi_i$ that is closest to $2\pi(k-1)/K^{*}$. In what follows, a distributed search-and-monitoring scheme is developed in \textbf{Algorithm}~\ref{tab:cam}, which enables multi-agent systems attaining optimal solutions to Problem (\ref{min_cost}) with any specified accuracy. First of all, some key parameters are initialized before activating the algorithm. Then the $i$-th agent determines if it is closet to the phase angle $2\pi(k-1)/K^{*}$. If this is true, the partition bar of the $i$-th agent shifts to $2\pi(k-1)/K^{*}$ and keeps unchanged. Meanwhile, the position of the $i$-th agent ${p}_i$ is updated with (\ref{dp}) and (\ref{u_star}). If $\varphi_i$  is not closet to $2\pi(k-1)/K^{*}$, it is updated with (\ref{sys}) and ${p}_i$ is still updated with (\ref{dp}) and (\ref{u_star}). The above process lasts for a time period of $T_{\epsilon}$. Then the $i$-th agent saves the results on $\varphi^k_i$, ${p}^k_i$, $J^k_{E_i}$ in the set ${C}^k_i$, while creating a new set $\Omega^k_i=\{J^k_{E_i}\}$. After that, the $i$-th agent communicates with $(i-1)$-th agent to obtain $\Omega^k_{i-1}$ and checks whether $|\Omega^k_i|\neq|\Omega^k_i\bigcup\Omega^k_{i-1}|$ holds. If the above condition is satisfied, $\Omega^k_i$ is updated with $\Omega^k_i=\Omega^k_i\bigcup\Omega^k_{i-1}$ before sending to the $(i-1)$-th agent. Subsequently, the $i$-th agent continues to receive $\Omega^k_{i-1}$ from $(i-1)$-th agent. Once the set $\Omega^k_i$ is no longer updated, $J_i^k$ is computed with $J_i^k=\sum_{\Omega^k_i}J^k_{E_i}$. The above loop lasts for $K^{*}$ times, then the $(i-1)$-th agent selects the index $k^{*}=\arg\min_{1\leq k\leq K^{*}}J_i^k$ in order to finalize $\varphi_i$ and ${p}_i$ with ${C}^{k^{*}}_i$.

\begin{theorem}
\textbf{Algorithm}~\ref{tab:cam} ensures multi-agent systems converge to optimal solutions to Problem (\ref{min_cost}) with arbitrary small tolerance.
\end{theorem}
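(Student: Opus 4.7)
The plan is to exploit the dimensionality reduction established just before the theorem, namely that Problem~(\ref{min_cost}) is equivalent to the one-dimensional minimization $\min_{\varphi_1 \in [0,2\pi]} J(\varphi_1, \mathbf{p}^{*}(\varphi_1))$ once equitable partition and the pointwise optimal positions are imposed. The algorithm samples $K^{*}$ candidate values $\varphi_1 = 2\pi(k-1)/K^{*}$, $k=1,\ldots,K^{*}$, with spacing at most $\epsilon_p$, and for each such sample it effectively computes the associated cost. I would thus argue that (i) each sampled cost is close to the ``ideal'' cost $J(\cdot,\mathbf{p}^{*}(\cdot))$ evaluated at that sample, and (ii) the minimum over a sufficiently fine grid of a uniformly continuous function on $[0,2\pi]$ approximates the global minimum arbitrarily well.

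First I would fix an iteration index $k$ and analyze what happens during the inner \textbf{while} loop of Algorithm~\ref{tab:cam}. The agent whose index lies in $\Lambda_k$ pins its bar at the value $2\pi(k-1)/K^{*}$, while the remaining $N-1$ bars evolve under (\ref{sys}). The Lyapunov argument of Lemma~\ref{lem_equ} together with the exponential decay established in Lemma~\ref{lem_exp} apply verbatim to this reduced system (the pinned bar plays the role of a fixed boundary), so the bars converge at rate $\mathcal{O}(e^{-c_2 t})$ to the unique equitable configuration with $\varphi_{l_k} = 2\pi(k-1)/K^{*}$. Simultaneously, by Theorem~\ref{theo1}, the positions $p_i$ track $p_i^{*}(\varphi)$ and approach $p_{i,\infty}^{*}$. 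Consequently there exists $T_\epsilon$ large enough so that at the end of the loop the accumulated cost $J_i^k$ lies within any prescribed tolerance $\epsilon_T$ of $J\bigl(2\pi(k-1)/K^{*},\mathbf{p}^{*}(2\pi(k-1)/K^{*})\bigr)$.

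Next I would invoke Proposition~\ref{prop_opt} and the uniform continuity of $J(\varphi_1,\mathbf{p}^{*}(\varphi_1))$ on the compact interval $[0,2\pi]$ (a consequence of the Lipschitz bound (\ref{nest}) for $\xi$, the smoothness of $f$, and continuity of $\mathbf{p}^{*}(\varphi_1)$ supplied by the implicit function theorem, as already used in Claim~1 of Theorem~\ref{theo1}). Let $\varphi_1^{\mathrm{opt}}$ attain the minimum in Proposition~\ref{prop_opt} and pick $k^o$ such that $|\varphi_1^{\mathrm{opt}} - 2\pi(k^o-1)/K^{*}| \le \epsilon_p$. Uniform continuity gives a modulus $\omega_J(\cdot)$ with $\omega_J(\epsilon_p)\to 0$ as $\epsilon_p\to 0$, and the reasoning above yields
\[
J_i^{k^o} \le J\bigl(\varphi_1^{\mathrm{opt}},\mathbf{p}^{*}(\varphi_1^{\mathrm{opt}})\bigr) + \omega_J(\epsilon_p) + \epsilon_T.
\]
Since the algorithm finalizes the configuration using $k^{*}=\arg\min_{k} J_i^k$, the delivered cost is no larger than $J_i^{k^o}$, and $\epsilon_p$ and $\epsilon_T$ can be driven to zero by enlarging $K^{*}$ and $T_\epsilon$. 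This yields convergence to the optimum with arbitrary tolerance.

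The main obstacle I anticipate is bookkeeping in the two-parameter limit $(\epsilon_p,T_\epsilon)\to(0,\infty)$: one must justify that after pinning a single bar, the reduced partition system still equalizes and preserves the structural assumptions that make Theorem~\ref{theo1} applicable (in particular that $\mathbf{H}_{J,p_i}$ retains full rank along the sampled configuration and that no partition-bar collision occurs, which is covered by Lemma~\ref{lem_col}). The continuity/Lipschitz analysis of $\mathbf{p}^{*}(\varphi_1)$ near the pinned value is where I expect the bulk of the technical effort.
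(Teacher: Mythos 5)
Your proposal is correct and follows essentially the same route as the paper: discretize $\varphi_1$ into a grid of spacing at most $\epsilon_p$, bound the cost gap between the optimal $\varphi_1$ and its nearest grid point via the Lipschitz estimate (\ref{nest}) for $\xi$ and the implicit-function-theorem bound on $\partial p_i^{*}/\partial\varphi_i$, and absorb the finite-horizon transient into an $\epsilon$ controlled by $T_{\epsilon}$. The only difference is presentational: the paper replaces your abstract modulus of uniform continuity $\omega_J(\epsilon_p)$ with an explicit three-term decomposition yielding the constants $\bar{\omega}_f\bigl(1+\bar{\omega}/\underline{\omega}\bigr)\sum_{i}(\bar{\omega}/\underline{\omega})^{i-1}$ and $\bar{m}L\sum_{i}(\bar{\omega}/\underline{\omega})^{i-1}$, while your treatment of the $\arg\min$ step and of the pinned-bar reduced dynamics is, if anything, slightly more careful.
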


\begin{proof}
Let $J(\varphi^{*},\mathbf{p}^{*})$ and $J(\varphi^{\epsilon},\mathbf{p}^{\epsilon})$ denote the optimal solution to Problem (\ref{min_cost}) and actual solution with the proposed algorithm, respectively. Then their error can be estimated by
\begin{equation*}
\begin{split}
|J(\varphi^{*},\mathbf{p}^{*})-J(\varphi^{\epsilon},\mathbf{p}^{\epsilon})|&=|J(\varphi^{*},\mathbf{p}^{*})-J(\varphi^k,\mathbf{p}^{*})+J(\varphi^k,\mathbf{p}^{*})-J(\varphi^k,\mathbf{p}^{k})+J(\varphi^k,\mathbf{p}^{k})-J(\varphi^{\epsilon},\mathbf{p}^{\epsilon})|\\
&\leq |J(\varphi^{*},\mathbf{p}^{*})-J(\varphi^k,\mathbf{p}^{*})|+|J(\varphi^k,\mathbf{p}^{*})-J(\varphi^{k},\mathbf{p}^k)|+|J(\varphi^k,\mathbf{p}^{k})-J(\varphi^{\epsilon},\mathbf{p}^{\epsilon})|.
\end{split}
\end{equation*}
The above three terms can be estimated one by one as follows. First of all, define
$$
\omega_{f}(\theta,{p}_i^{*})=\int_{r_{in}(\theta)}^{r_{out}(\theta)}f({p}^{*}_i,r,\theta)\rho(r,\theta)rdr,
$$
and one can get
\begin{equation*}
\begin{split}
|J(\varphi^{*},\mathbf{p}^{*})-J(\varphi^k,\mathbf{p}^{*})|&=\left|\sum_{i=1}^N\left[\int_{\varphi^{*}_i}^{\xi(\varphi^{*}_i)}\omega_{f}(\theta,{p}_i^{*})d\theta
-\int_{\varphi^k_i}^{\xi(\varphi^k_i)}\omega_{f}(\theta,{p}_i^{*})d\theta\right]\right|\\
&=\left|\sum_{i=1}^N\left[\int_{\varphi^{*}_i}^{\varphi^{k}_i}\omega_{f}(\theta,{p}_i^{*})d\theta
+\int_{\xi(\varphi^k_i)}^{\xi(\varphi^{*}_i)}\omega_{f}(\theta,{p}_i^{*})d\theta\right]\right|\\
&\leq \sum_{i=1}^N\left|\int_{\varphi^{*}_i}^{\varphi^{k}_i}\omega_{f}(\theta,{p}_i^{*})d\theta\right|
+\sum_{i=1}^N\left|\int_{\xi(\varphi^k_i)}^{\xi(\varphi^{*}_i)}\omega_{f}(\theta,{p}_i^{*})d\theta\right|\\
&\leq \bar{\omega}_{f}\sum_{i=1}^N |\varphi^{*}_i-\varphi^{k}_i|+\bar{\omega}_{f}\sum_{i=1}^N |\xi(\varphi^{*}_i)-\xi(\varphi^k_i)|.
\end{split}
\end{equation*}
Note that $|\xi(\varphi^{*}_i)-\xi(\varphi^k_i)|\leq \frac{\bar{\omega}}{\underline{\omega}}|\varphi^{*}_i-\varphi^{k}_i|$ according to Inequality (\ref{nest})
in Lemma~\ref{lem_fun}, and this yields
\begin{equation*}
\begin{split}
|J(\varphi^{*},\mathbf{p}^{*})-J(\varphi^k,\mathbf{p}^{*})|&\leq \bar{\omega}_{f}\sum_{i=1}^N |\varphi^{*}_i-\varphi^{k}_i|+\bar{\omega}_{f}\sum_{i=1}^N |\xi(\varphi^{*}_i)-\xi(\varphi^k_i)|\\
&\leq \bar{\omega}_{f}\left(1+\frac{\bar{\omega}}{\underline{\omega}}\right)\sum_{i=1}^N|\varphi^{*}_i-\varphi^{k}_i|.
\end{split}
\end{equation*}
It follows from $\varphi^{*}_i-\varphi^{k}_i=\xi(\varphi^{*}_{i-1})-\xi(\varphi^{k}_{i-1})$ that
$$
\sum_{i=1}^N|\varphi^{*}_i-\varphi^{k}_i|\leq\epsilon_p\sum_{i=1}^N\left(\frac{\bar{\omega}}{\underline{\omega}}\right)^{i-1}.
$$
Thus, one can get
$$
|J(\varphi^{*},\mathbf{p}^{*})-J(\varphi^k,\mathbf{p}^{*})|\leq\epsilon_p\bar{\omega}_{f}\left(1+\frac{\bar{\omega}}{\underline{\omega}}\right)\sum_{i=1}^N\left(\frac{\bar{\omega}}{\underline{\omega}}\right)^{i-1}.
$$
Secondly, one has
\begin{equation*}
\begin{split}
|J(\varphi^k,\mathbf{p}^{*})-J(\varphi^k,\mathbf{p}^k)|&=\left|\sum\limits_{i = 1}^N \int_{E_i(\mathbf{\varphi}^k)}f({p}^{*}_i,q)\rho(q)dq
-\sum\limits_{i = 1}^N\int_{E_i(\mathbf{\varphi}^k)}f({p}^{k}_i,q)\rho(q)dq\right| \\
&= \left|\sum\limits_{i = 1}^N\int_{E_i(\mathbf{\varphi}^k)}f({p}^{*}_i,q)\rho(q)-f({p}^{k}_i,q)\rho(q)dq \right| \\
&\leq \sum\limits_{i = 1}^N\int_{E_i(\mathbf{\varphi}^k)}|f({p}^{*}_i,q)-f({p}^{k}_i,q)|\rho(q)dq.
\end{split}
\end{equation*}
Since $|f({p}^{*}_i,q)-f({p}^{k}_i,q)|\leq\sup_{\alpha\in[0,1],q\in E_i(\varphi^k)}\|\nabla_{{p}_i}f(\alpha{p}^{*}_i+(1-\alpha{p}^{k}_i),q)\|\cdot\|{p}^{*}_i-{p}^{k}_i\|$, one can obtain
\begin{equation*}
\begin{split}
|J(\varphi^k,\mathbf{p}^{*})-J(\varphi^k,\mathbf{p}^k)|&\leq \sum\limits_{i = 1}^N\int_{E_i(\mathbf{\varphi}^k)}\rho(q)dq \sup_{\alpha\in[0,1],q\in E_i(\varphi^k)}\|\nabla_{{p}_i}f(\alpha{p}^{*}_i+(1-\alpha){p}^{k}_i,q)\|\cdot\|{p}^{*}_i-{p}^{k}_i\| \\
&=\bar{m} \sum_{i=1}^N L_i \|{p}^{*}_i-{p}^{k}_i\| \\
\end{split}
\end{equation*}
with $L_i=\sup_{\alpha\in[0,1],q\in E_i(\varphi^k)}\|\nabla_{{p}_i}f(\alpha{p}^{*}_i+(1-\alpha{p}^{k}_i),q)\|$.
Moreover, if $\mathbf{H}_{J,p_i}({p}_i^{*})$ has full rank, the term $\|{p}^{*}_i-{p}^{k}_i\|$ can be estimated by
$$
\|{p}^{*}_i-{p}^{k}_i\|\leq\sup_{\varphi_i\in[0,2\pi]}\left\|\frac{\partial{p}^{*}_i}{\partial\varphi_i}\right\|\cdot|\varphi^{*}_i-\varphi^k_i|,
$$
which leads to
\begin{equation*}
\begin{split}
|J(\varphi^k,\mathbf{p}^{*})-J(\varphi^k,\mathbf{p}^k)|&\leq\bar{m}\sum_{i=1}^N L_i \|{p}^{*}_i-{p}^{k}_i\| \\
&\leq \bar{m}\sum_{i=1}^N L_i\sup_{\varphi_i\in[0,2\pi]}\left\|\frac{\partial{p}^{*}_i}{\partial\varphi_i}\right\|\cdot|\varphi^{*}_i-\varphi^k_i| \\
&\leq \bar{m}L\sum_{i=1}^N|\varphi^{*}_i-\varphi^k_i|\leq \epsilon_p\bar{m}L\sum_{i=1}^N\left(\frac{\bar{\omega}}{\underline{\omega}}\right)^{i-1}
\end{split}
\end{equation*}
with $L=\sup_{\varphi_i\in[0,2\pi],i\in I_N}\left\|\frac{\partial{p}^{*}_i}{\partial\varphi_i}\right\|L_i$. Finally, considering that $\lim_{t\to\infty}J(\varphi^{\epsilon},\mathbf{p}^{\epsilon})=J(\varphi^k,\mathbf{p}^{k})$, for any $\epsilon>0$, there exists $T_{\epsilon}>0$ such that
$|J(\varphi^k,\mathbf{p}^{k})-J(\varphi^{\epsilon},\mathbf{p}^{\epsilon})|<\epsilon$ whenever $t>T_{\epsilon}$. Therefore, the error $|J(\varphi^{*},\mathbf{p}^{*})-J(\varphi^{\epsilon},\mathbf{p}^{\epsilon})|$ is bounded by
\begin{equation*}
\begin{split}
|J(\varphi^{*},\mathbf{p}^{*})-J(\varphi^{\epsilon},\mathbf{p}^{\epsilon})|&\leq\epsilon_p\bar{\omega}_{f}\left(1+\frac{\bar{\omega}}{\underline{\omega}}\right)\sum_{i=1}^N\left(\frac{\bar{\omega}}{\underline{\omega}}\right)^{i-1}
+\epsilon_p\bar{m}L\sum_{i=1}^N\left(\frac{\bar{\omega}}{\underline{\omega}}\right)^{i-1}+\epsilon, \quad t\geq T_{\epsilon}.
\end{split}
\end{equation*}
Note that the terms $\epsilon_p$ and $\epsilon$ can be arbitrary small by enlarging $K^{*}$ and $T_{\epsilon}$, respectively. This completes the proof.
\end{proof}

\begin{remark}
According to the implicit function theorem, if $\mathbf{H}_{J,p_i}(p_i^{*},\varphi_i)$ has full rank, there exists a unique continuously differentiable function $p_i^{*}(\varphi_i)$ such that $\nabla_{{p}_i}J(p_i^{*}(\varphi_i),\varphi_i)={0}$. Moreover, one can get
$$
\frac{\partial{p}^{*}_i}{\partial\varphi_i}=-H_{J,p_i}(p^{*}_i)^{-1}\frac{\partial\nabla_{{p}_i}J(p_i^{*})}{\partial\varphi_i}.
$$
In addition, by fixing one phase angle of partition bar, equitable workload partition can still be achieved with a different rate of exponential convergence~\cite{auto13}.
\end{remark}

The above theoretical results can be employed to solve the classic location optimization problem~\cite{cor04}, where the function $f({q}_i,p)=\|{p}_i-q\|^2$ quantifies the cost of the $i$-th agent to service the event occurred at the point $q$. According to (\ref{cost}),  this allows to get the following performance index.
$$
J(\mathbf{\varphi},\mathbf{p})= \sum_{i=1}^N \int_{E_i(\mathbf{\varphi})}\|{p}_i-q\|^2\rho(q)dq.
$$
For the above location optimization problem, one can obtain two corollaries as follows.
\begin{coro}\label{p2cei_lem}
Dynamical system (\ref{dp}) with control input (\ref{u_star}) ensures that each agent can move to the centroid of sub-region, $i.e.$
$\lim_{t\to\infty}{{p}_i}(t)={c}^{*}_{E_i}$, $\forall i \in I_N$.
\end{coro}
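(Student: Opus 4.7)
The plan is to specialize Theorem~\ref{theo1} (Claim 2) to the quadratic service cost. First, I would expand the gradient explicitly: with $f({p}_i,q)=\|{p}_i-q\|^2$, we have
$$
\nabla_{{p}_i}J = \int_{E_i(\varphi)} 2({p}_i-q)\rho(q)\,dq = 2m_i {p}_i - 2\int_{E_i(\varphi)} q\,\rho(q)\,dq,
$$
so solving $\nabla_{{p}_i}J={0}$ yields
$$
{p}_i^{*} = \frac{1}{m_i}\int_{E_i(\varphi)} q\,\rho(q)\,dq = {c}_{E_i}.
$$
Thus the interior critical point coincides with the centroid of the subregion $E_i$. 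Since $J_{E_i}$ is strictly convex in ${p}_i$, the infimum characterising $\arg\inf_{{p}_i\in\Pi_i}J_{E_i}$ is attained at this interior critical point rather than on $\partial E_i$, so ${p}_i^{*}={c}_{E_i}$.

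Next, I would verify the full-rank hypothesis required by Theorem~\ref{theo1}. Direct differentiation gives $\mathbf{H}_{J,p_i}({p}_i^{*})=2m_i\mathbf{I}_2$, and by Lemma~\ref{lem_col} we have $m_i(t)>0$ for all $t>0$, hence $\text{rank}[\mathbf{H}_{J,p_i}({p}_i^{*})]=2$ throughout the evolution and, in particular, at the limit configuration.

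Finally, Claim 2 of Theorem~\ref{theo1} yields $\lim_{t\to\infty}{p}_i(t)={p}_{i,\infty}^{*}$. By Lemma~\ref{lem_phi}, $\varphi_i(t)\to\varphi_i^{*}$, so each subregion $E_i$ stabilises; combined with Lemma~\ref{dce} this guarantees that ${c}_{E_i}(t)$ converges to some limit ${c}_{E_i}^{*}$. Using the identity ${p}_i^{*}(t)={c}_{E_i}(t)$ derived above for every $t$, we conclude ${p}_{i,\infty}^{*}={c}_{E_i}^{*}$, and therefore $\lim_{t\to\infty}{p}_i(t)={c}_{E_i}^{*}$, $\forall i\in I_N$. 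I do not anticipate any serious obstacle here; the only delicate point is the identification ${p}_{i,\infty}^{*}={c}_{E_i}^{*}$, which follows cleanly because the critical-point equation coincides with the centroid equation for the quadratic cost, and all heavier analytical machinery has already been established in Theorem~\ref{theo1} and its supporting lemmas.
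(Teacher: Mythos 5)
Your proposal is correct and follows essentially the same route as the paper: compute $\nabla_{p_i}J=2m_i(p_i-c_{E_i})$ for the quadratic cost so that the critical point is the centroid, invoke Claim 2 of Theorem~\ref{theo1}, and use Lemma~\ref{lem_phi} (convergence of the partition) to identify the limit with $c^{*}_{E_i}$. The only difference is cosmetic — the paper passes through $\|\nabla_{p_i}J\|=2m_i\|p_i-c_{E_i}\|\to 0$ with $m_i\to\bar m>0$ and a triangle inequality, whereas you identify $p_i^{*}(t)=c_{E_i}(t)$ directly — and you additionally verify the Hessian rank hypothesis $\mathbf{H}_{J,p_i}=2m_i\mathbf{I}_2$ explicitly, which the paper leaves implicit.
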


\begin{proof}
It follows from $f({p}_i,q)=\|{p}_i-q\|^2$ and the cost function (\ref{cost}) that
\begin{equation*}
\begin{aligned}
\nabla_{{p}_i}J &=2 {\int_{{E_i}} {\rho \left(q\right)}}\left({{p_i}-q}\right)^Tdq \\
 &=2 {{p_i}\int_{{E_i}} {\rho \left( q \right)} dq - 2\int_{{E_i}} {q\rho \left( q \right)dq} } \\
 &=2 m_i\left( {{p_i} - {c_{E_i}}}\right), \quad \forall i \in I_N,
\end{aligned}
\end{equation*}
where
\begin{equation}\label{cei}
{c}_{{E_i}}=\frac{{\int_{{E_i}} {q\rho \left( q \right)dq} }}{{\int_{{E_i}} {\rho \left( q \right)} dq}}
\end{equation}
represents the centroid of sub-region $E_i$. According to Claim 2) in Theorem~\ref{theo1}, one has $\lim_{t\to\infty}\|\nabla_{{p}_i}J\|=0$.
Considering that $\|\nabla_{{p}_i}J\|=2 m_i\|p_i-c_{E_i}\|$ and $\lim_{t\to\infty}m_i(t)=\bar{m}>0$ according to Lemma~\ref{lem_equ}, it follows that
$$
\lim_{t\to\infty}\|\nabla_{{p}_i}J\|=2\lim_{t\to\infty}m_i(t)\cdot\lim_{t\to\infty}\|p_i-c_{E_i}\|=2\bar{m}\cdot\lim_{t\to\infty}\|p_i-c_{E_i}\|=0,
$$
which leads to $\lim_{t\to\infty}\|p_i(t)-c_{E_i}(t)\|=0$. Moreover, note that ${c}_{E_i}=(c^x_{E_i},c^y_{E_i})$ is a continuous function of $\varphi_i$,
$i\in I_N$ and it can be computed as
\begin{equation*}
\begin{aligned}
c_{{E_i}}^x=\left\{
	\begin{aligned}
	&{\frac{{\int_{{\varphi _i}}^{2\pi } {{\omega _x}\left( \theta  \right)d\theta }  + \int_0^{{\varphi _{i + 1}}} {{\omega _x}\left( \theta  \right)d\theta } }}{{{m_i}}}},\quad & {{\varphi _{i + 1}} < {\varphi _i}}\\
	&{\frac{{\int_{{\varphi _i}}^{{\varphi _{i + 1}}} {{\omega _x}\left( \theta  \right)d\theta } }}{{{m_i}}}.} \quad & otherwise\\
	\end{aligned}
	\right.\\
c_{{E_i}}^y=\left\{
	\begin{aligned}
	&{\frac{{\int_{{\varphi _i}}^{2\pi } {{\omega _y}\left( \theta  \right)d\theta }  + \int_0^{{\varphi _{i + 1}}} {{\omega _y}\left( \theta  \right)d\theta } }}{{{m_i}}}},\quad & {{\varphi _{i + 1}} < {\varphi _i}}\\
	&{\frac{{\int_{{\varphi _i}}^{{\varphi _{i + 1}}} {{\omega _y}\left( \theta  \right)d\theta } }}{{{m_i}}}.} \quad & otherwise\\
	\end{aligned}
	\right.
\end{aligned}
\end{equation*}
with ${\omega _x}\left(\theta\right) = \int_{{r_{in}}\left( \theta  \right)}^{{r_{out}}\left( \theta  \right)} {r\cos \left(\theta\right)\rho \left( {r,\theta } \right)rdr}$
and ${\omega _y}\left( \theta\right) = \int_{{r_{in}}\left(\theta\right)}^{{r_{out}}\left(\theta  \right)}{r\sin \left(\theta\right)\rho \left( {r,\theta } \right)rdr}$.
According to Lemma~\ref{lem_phi}, one has
$$
\lim_{t\to\infty}c_{E_i}(t)=\lim_{\varphi_i\to\varphi^{*}_i}c_{E_i}(\varphi_i)=c^{*}_{E_i}, \quad i\in I_N
$$
Since $0\leq\|p_i(t)-c^{*}_{E_i}\|=\|p_i(t)-c_{E_i}(t)+c_{E_i}(t)-c^{*}_{E_i}\|\leq \|p_i(t)-c_{E_i}(t)\|+\|c_{E_i}(t)-c^{*}_{E_i}\|$, it allows to obtain
$$
0\leq \lim_{t\to\infty}\|p_i(t)-c^{*}_{E_i}\|\leq \lim_{t\to\infty}\|p_i(t)-c_{E_i}(t)\|+\lim_{t\to\infty}\|c_{E_i}(t)-c^{*}_{E_i}\|=0,
$$
which implies $\lim_{t\to\infty}p_i(t)=c^{*}_{E_i}$. The proof is thus completed.
\end{proof}

\begin{remark}
For fixed partitions, $\mathbf{\varphi}=(\varphi_1,\varphi_2,...,\varphi_N)^T$ is a constant vector and the cost function (\ref{cost}) is the continuous function of state variables $\mathbf{p}=({p}_1,{p}_2,...,{p}_N)^T$. Then it follows from
$\nabla_{{p}_i}J=2{m_i}\left({p}_i-c_{E_i}\right)=0$, $\forall i\in I_N$
that the set of centroid $\mathbf{p}^{*}=(c_{E_1},c_{E_2},...,c_{E_N})$ is the unique critical point of the cost function (\ref{cost}).
Moreover, its Hessian matrix satisfies
\begin{equation*}
\begin{split}
H[J(\mathbf{\varphi},\mathbf{p})]=\left[\frac{\partial^2J}{\partial p_i\partial p_j}\right] =2 \text{diag}(m_1,m_1,m_2,m_2,...,m_N,m_N)>0
\end{split}
\end{equation*}
at the critical point $\mathbf{p}^{*}=(c_{E_1},c_{E_2},...,c_{E_N})$. Thus, the cost function is a strictly convex function with respect to state variables $\mathbf{p}$. This indicates that the set of centroid ensures the minimum of cost function (\ref{cost}) with fixed partitions.
\end{remark}

\begin{coro}
Multi-agent dynamics (\ref{dp}) with (\ref{u_star}) is input-to-state stable.
\end{coro}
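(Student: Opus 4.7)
The plan is to recognize that the closed loop $\dot{p}_i = -\kappa_p(p_i - p_i^*)$ is a linear, exponentially stable system with rate $\kappa_p$ driven by a slowly varying reference $p_i^*(t)$. To establish ISS in the usual sense, I would augment the dynamics with an exogenous disturbance $w_i$ modeling measurement noise or unmodeled perturbations and consider
\begin{equation*}
\dot{p}_i = -\kappa_p (p_i - p_i^{*}) + w_i, \quad i \in I_N,
\end{equation*}
then prove an estimate of the form $\|p_i(t) - p_{i,\infty}^*\| \leq \beta(\|p_i(0) - p_{i,\infty}^*\|, t) + \gamma(\|w_i\|_{\infty})$ for some $\beta \in \mathcal{KL}$ and $\gamma \in \mathcal{K}_\infty$ via a quadratic ISS-Lyapunov function.

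First, I would shift coordinates to the error $\tilde{p}_i := p_i - p_{i,\infty}^{*}$, where $p_{i,\infty}^{*}$ is the limit that exists under the full-rank Hessian condition established inside the proof of Theorem~\ref{theo1}. The shifted dynamics read
\begin{equation*}
\dot{\tilde{p}}_i = -\kappa_p \tilde{p}_i + \kappa_p\bigl(p_i^{*}(t) - p_{i,\infty}^{*}\bigr) + w_i,
\end{equation*}
in which the middle term is bounded and tends to zero, as a consequence of Claim~1 of Theorem~\ref{theo1} together with Lemmas~\ref{lem_phi} and \ref{dce}, and the continuous differentiability of $p_i^{*}(\varphi_i)$ provided by the implicit function theorem.

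Second, I would pick the candidate $V_i(\tilde{p}_i) = \tfrac{1}{2}\|\tilde{p}_i\|^2$. Differentiation along the trajectories and a Cauchy-Schwarz plus Young inequality would yield
\begin{equation*}
\dot{V}_i \leq -\tfrac{\kappa_p}{2}\|\tilde{p}_i\|^2 + \tfrac{1}{\kappa_p}\|w_i\|^2 + \kappa_p\|p_i^{*}(t) - p_{i,\infty}^{*}\|^2,
\end{equation*}
i.e., $\dot{V}_i \leq -\kappa_p V_i + \sigma(\|w_i\|) + \delta(t)$ with $\delta(t)\to 0$. A standard comparison-lemma argument (or direct integration via the variation-of-constants formula) then produces the ISS bound and completes the claim. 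Since the dynamics in each coordinate $i$ decouples, the vector ISS estimate for $(p_1,\dots,p_N)$ follows by stacking.

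The main obstacle lies in handling the time-varying drift $\kappa_p(p_i^{*}(t) - p_{i,\infty}^{*})$ cleanly: one must argue that it can either be absorbed into an effective input for the ISS definition or treated as an asymptotically vanishing perturbation that does not invalidate the ISS estimate. Once this is resolved by invoking the convergence $\dot{p}_i^{*}(t)\to 0$ from Theorem~\ref{theo1} together with the compactness of the coverage region $\Omega$, the remaining computation reduces to the textbook ISS analysis of an exponentially stable linear system driven by a bounded input.
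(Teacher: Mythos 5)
Your proposal never pins down what the ``input'' in the ISS claim actually is, and this is where it diverges from---and falls short of---the paper's argument. The paper proves ISS of the agent subsystem \emph{with respect to the partition dynamics}: it takes $H(t)=\sum_{i=1}^{N} m_i\|p_i-c_{E_i}\|^2$ as an ISS-Lyapunov function (here $p_i^{*}=c_{E_i}$ since $f(p_i,q)=\|p_i-q\|^2$), uses the parallel axis theorem to split $J$ into an intrinsic part plus $H$, and differentiates to obtain $\dot H \le -2\kappa_p H + \|\mathbf{\dot{\varphi}}\|\cdot\sup\|\mathbf{e_{\eta}}\|$, which integrates to $H(t)\le H(t_0)e^{-2\kappa_p(t-t_0)}+\sup\|\mathbf{e_{\eta}}\|\cdot\sup_{\tau}\|\mathbf{\dot{\varphi}}(\tau)\|/(2\kappa_p)$---an ISS estimate with input $\mathbf{\dot{\varphi}}$ and an explicit gain. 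You instead graft an exogenous disturbance $w_i$ onto dynamics (\ref{dp})--(\ref{u_star}) that the corollary does not contain, and relegate the genuine coupling term $\kappa_p\bigl(p_i^{*}(t)-p_{i,\infty}^{*}\bigr)$ to an ``asymptotically vanishing perturbation'' $\delta(t)$.

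That relegation is the gap. A trajectory-specific $\delta(t)\to 0$ does not yield a bound of the form $\beta(\cdot,t)+\gamma(\|w\|_\infty)$: either $\delta$ must be counted as part of the input---in which case the input is precisely the centroid motion, i.e.\ you are led back to the paper's choice of $\mathbf{\dot{\varphi}}$ as the input and need the quantitative link between $\dot{c}_{E_i}$ and $\dot{\varphi}_i,\dot{\varphi}_{i+1}$ used in Lemma~\ref{dce}---or you obtain only asymptotic convergence, not input-to-state stability. Moreover, your coordinate shift to $p_{i,\infty}^{*}$ and the claim $\delta(t)\to0$ both rest on the full-rank Hessian hypothesis of Theorem~\ref{theo1}, an assumption the corollary does not impose and the paper's proof does not require. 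The repair is to drop $w_i$, identify the input as $\mathbf{\dot{\varphi}}$ (equivalently the induced centroid velocity), and carry it through the Lyapunov estimate as an explicit gain term rather than a vanishing remainder.
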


\begin{proof}
The time derivative of (\ref{cost}) with respect to the compound dynamics (\ref{sys})
and (\ref{dp}) is given by
\begin{equation*}
\begin{split}
\frac{dJ}{dt}&=\sum_{i=1}^{N}\frac{\partial J}{\partial {p}_i}\dot{p}_i+\sum_{i=1}^{N}\frac{\partial J}{\partial {\varphi}_i}\dot{\varphi}_i \\
&=2\sum_{i=1}^N m_i\left({{p_i}-{c_{E_i}}}\right)^T\dot{p}_i+\sum_{i=1}^{N}\dot{\varphi}_i\left[\eta(\varphi_i,p_{i-1})-\eta(\varphi_i,p_{i})\right] \\
&=-2\kappa_p\sum_{i = 1}^N m_i\|{{p_i}-{c_{E_i}}}\|^2+\sum_{i=1}^{N}\dot{\varphi}_i\left[ \eta(\varphi_i,p_{i-1})-\eta(\varphi_i,p_{i})\right] \\
\end{split}
\end{equation*}
with
\begin{equation*}
\begin{split}
\eta(\theta,s)=\int_{r_{in}(\theta)}^{r_{out}(\theta)}\rho(\theta,r)r^3dr+\|s\|^2\cdot\int_{r_{in}(\theta)}^{r_{out}(\theta)}\rho(\theta,r)rdr-2s^T\int_{r_{in}(\theta)}^{r_{out}(\theta)}\rho(\theta,r)
\left(
  \begin{array}{c}
    r\cos\theta \\
    r\sin\theta \\
  \end{array}
\right)
rdr.
\end{split}
\end{equation*}
In light of the parallel axis theorem, one has
$$
J=\sum_{i=1}^{N}\int_{E_i}\rho(q)\|q-c_{E_i}\|^2dq +\sum_{i=1}^{N} m_i\|{{p_i}-{c_{E_i}}}\|^2,
$$
which leads to
\begin{equation*}
\begin{split}
\frac{dJ}{dt}&=\sum_{i=1}^{N}\frac{\partial}{\partial\varphi_i}\int_{E_i}\rho(q)\|q-c_{E_i}\|^2dq\cdot\dot{\varphi}_i+\frac{d}{dt}\sum_{i=1}^{N} m_i\|{{p_i}-{c_{E_i}}}\|^2 \\
&=\sum_{i=1}^{N}\dot{\varphi}_i\left[ \eta(\varphi_i,c_{E_{i-1}})-\eta(\varphi_i,c_{E_i})\right]+\frac{d}{dt}\sum_{i=1}^{N} m_i\|{{p_i}-{c_{E_i}}}\|^2.
\end{split}
\end{equation*}
Therefore, one gets
\begin{equation*}
\begin{split}
\frac{d}{dt}\sum_{i=1}^{N} m_i\|{{p_i}-{c_{E_i}}}\|^2&=-2\kappa_p\sum_{i = 1}^N m_i\|{{p_i}-{c_{E_i}}}\|^2+\sum_{i=1}^{N}\dot{\varphi}_i\left[ \eta(\varphi_i,p_{i-1})-\eta(\varphi_i,p_{i})\right] \\
&-\sum_{i=1}^{N}\dot{\varphi}_i\left[\eta(\varphi_i,c_{E_{i-1}})-\eta(\varphi_i,c_{E_i})\right]
\end{split}
\end{equation*}
Let $H(t)=\sum_{i=1}^{N} m_i\|{{p_i}-{c_{E_i}}}\|^2$ and $e_{\eta}(\varphi_i,p_i,p_{i-1})=\eta(\varphi_i,p_{i-1})-\eta(\varphi_i,p_{i})-\eta(\varphi_i,c_{E_{i-1}})+\eta(\varphi_i,c_{E_i})$. Then it follows that
\begin{equation*}
\begin{split}
\dot{H}(t)&=-2\kappa_pH(t)+\sum_{i=1}^{N}\dot{\varphi}_i\cdot e_{\eta}(\varphi_i,p_i,p_{i-1}) \\
&\leq -2\kappa_pH(t)+\|\mathbf{\dot{\varphi}}\|\cdot\|\mathbf{e_{\eta}}(\varphi,p)\| \\
&\leq -2\kappa_pH(t)+\|\mathbf{\dot{\varphi}}(t)\|\cdot\sup_{\varphi\in[0,2\pi],p\in\Omega}\|\mathbf{e_{\eta}}(\varphi,p)\|
\end{split}
\end{equation*}
with $\mathbf{\dot{\varphi}}=(\dot{\varphi}_1,\dot{\varphi}_2,...,\dot{\varphi}_N)^T$ and
$\mathbf{e_{\eta}}(\varphi,p)=(e_{\eta}(\varphi_1,p_1,p_{N}),e_{\eta}(\varphi_2,p_2,p_{1}),...,e_{\eta}(\varphi_N,p_N,p_{N-1}))^T$. Solving the above differential inequality yields
\begin{equation*}
\begin{split}
H(t)&\leq H(t_0)e^{-2\kappa_p(t-t_0)}+\sup_{\varphi\in[0,2\pi],p\in\Omega}\|\mathbf{e_{\eta}}(\varphi,p)\|\cdot\int_{t_0}^{t} e^{2\kappa_p(\tau-t)}\|\mathbf{\dot{\varphi}}(\tau)\|d\tau \\
&\leq H(t_0)e^{-2\kappa_p(t-t_0)}+\sup_{\varphi\in[0,2\pi],p\in\Omega}\|\mathbf{e_{\eta}}(\varphi,p)\|\cdot\frac{\sup_{\tau\in[t_0,t]}\|\mathbf{\dot{\varphi}}(\tau)\|}{2\kappa_p},
\end{split}
\end{equation*}
which implies input-to-state stability of multi-agent dynamics according to the definition in~\cite{kha96}.
\end{proof}

\begin{remark}
If the reference point of coverage region is only available to a part of agents, a distributed estimator can be designed to obtain this reference point for each agent as follows:
$\dot{r}_i=\kappa_r(r_i-r_{i-1})$, $i\in I^r_N$ and $\dot{r}_i=0$, $i\notin I^r_N$, where $I^r_N$ denotes the set of agents with the reference point and $\kappa_r$ refers to a positive constant. Alternatively, it can be adopted as a special case of distributed observer of multi-agent system with a static leader in~\cite{hong08}.
\end{remark}

%
%

\begin{figure}[t!]
\centering
\includegraphics[width=16.5cm]{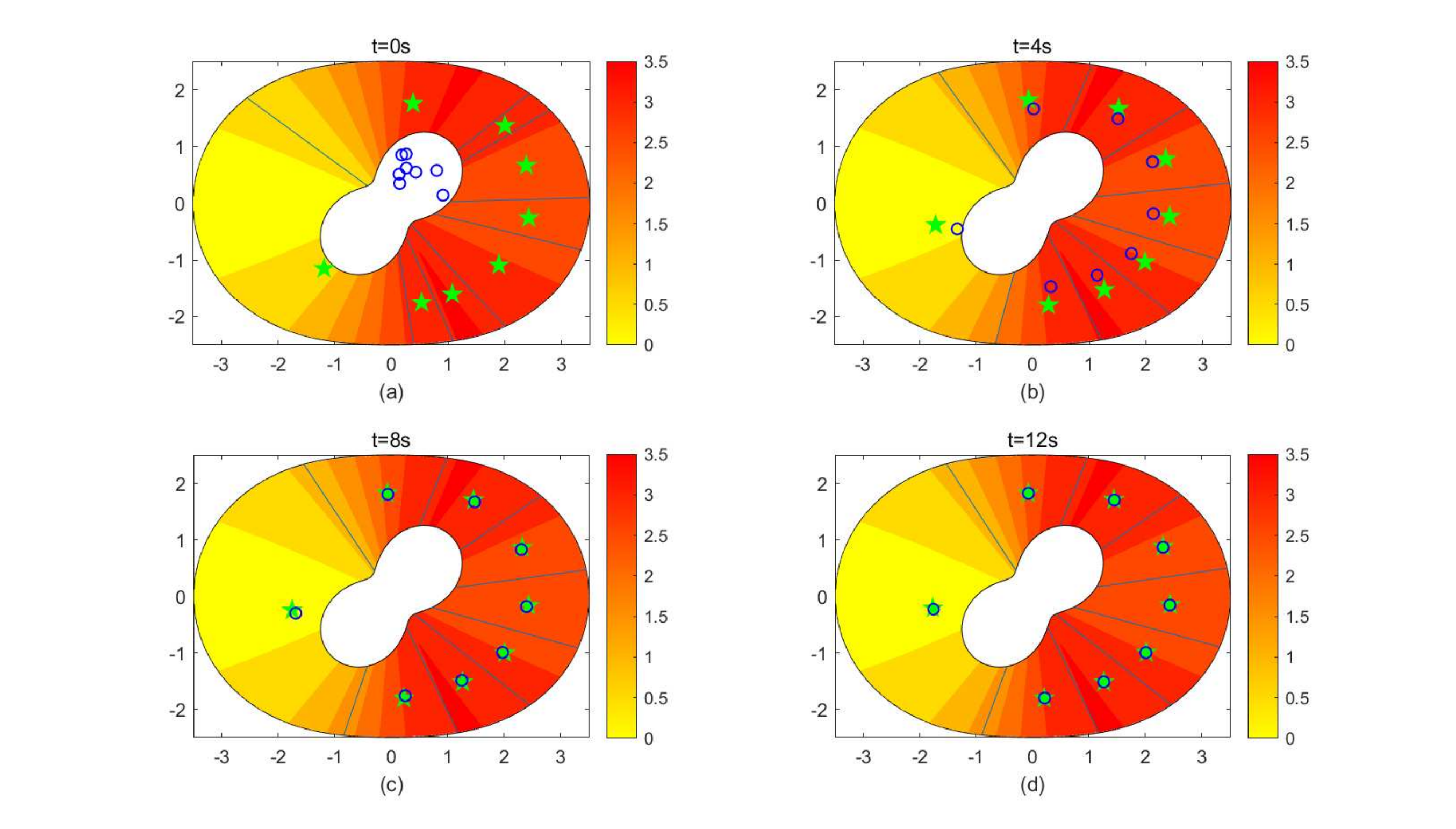}
\caption{Snapshots of simulation results on region partition, sub-region centroids and positions in multi-agent systems. Blue circles denote the mobile agents, and green stars refer to the sub-region centroids.}
\label{ac}
\end{figure}

\begin{figure}[t!]
\centering
\includegraphics[width=14cm]{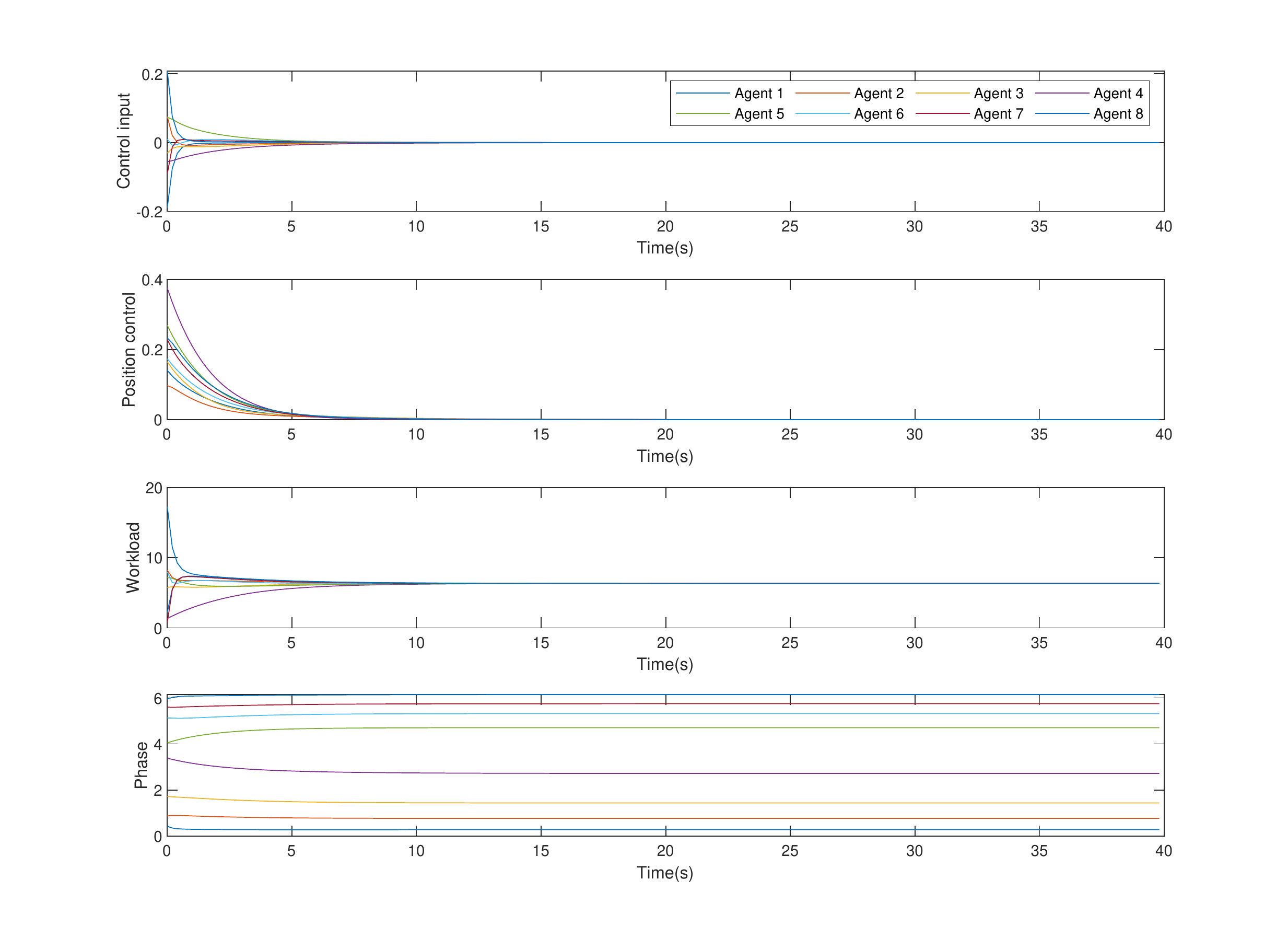}
\caption{Time evolution of physical variables in multi-agent systems.}
\label{workphase}
\end{figure}

\section{Case Studies}\label{sec:cas}
Case studies are carried out in this section to validate the proposed coverage algorithm through numerical simulations.
The classic location optimization problem is taken into account with the function $f({p}_i,q)=\|{p}_i-q\|^2$.
The proposed control approach consists of partition dynamics and distributed control strategy. The first part allows to partition the region into subregions with equal workload, while the second part enables agents to arrive at the centroid of sub-regions. As described in Table~\ref{tab:cam}, the $i$-th agent moves towards its centroid $\mathbf{c}_{E_i}$ while taking online partition to balance the load. The evolution of $\mathbf{c}_{E_i}$ depends on the phase angle $\phi_i$, while ${p}_i$ is independent of $\phi_i$.
In addition, the proposed coverage control algorithm is implemented in MATLAB, and simulation results are illustrated in Fig.~\ref{ac} with 4 snapshots displaying the simulation process of $8$ mobile agents. In Fig.~\ref{ac}, blue circles denote the mobile agents, and blue lines refer to the common boundary between agents. Green stars refer to the subregion centroids. The coverage region $\Omega$ is annular, and the polar equations of two closed curves are given
as follows: ${r_{in}}(\theta)=1+0.5\sin{2\theta}$ and ${r_{out}}(\theta)=3+0.5\cos{2\theta}$. Moreover, the workload density function is given by
$\rho(r,\theta) =e^{({{{\sin }^2}\theta+\cos\theta})}+0.01r$. Other parameters are empirically specified as follows: $\kappa_p=0.1$, $\kappa_{\varphi}=0.03$. Note that the parameter setting of control gains is closely related to the workload density function.

In the simulation, multi-agent positions and phase angles of partition bars are initialized at random. At the time $t=4$s, the operation of workload partition goes on and agents start moving towards the centroid of subregions. At the time $t=8$s, the agents get close to the centroid of subregion further.
The centroid of subregions evolves with the partition bars. At the time $t=12$s, each agent reaches its own centroid of subregion and the partition bars almost converge to the desired configuration. As observed in Fig.~\ref{ac}, each agent can eventually move to the centroid of its own subregion. Figure~\ref{workphase} presents time evolution of four key variables on the coverage control algorithm. The first variable is related to the control input for workload partition (i.e., $\dot{\varphi}$), which converges to zero as time approaches the infinity. The second variable is the Euclidean norm of velocity vector (i.e., $\|\dot{p}\|$) and it converges to zero as well. The third one describes the workload on each subregion, and they converge to the same value in the end. The last one represents the phase angle of partition bar, and these phase angles converge to the different values. The above simulation results demonstrate that
the proposed coverage control algorithm works well in the optimization of service cost and workload partition.

\section{Conclusions}\label{sec:con}
This paper addressed the distributed coverage problem of multi-agent systems in uncertain environment, and a novel coverage formulation was developed to minimize the service cost for random events with load balancing. Based on the coverage formulation, a distributed control strategy was designed to drive multi-agent systems towards a desired configuration. In addition, a search algorithm was proposed to identify the optimal configuration with theoretical guarantee. Moreover, case studies were carried out with numerical simulations to substantiate the effectiveness of the proposed
control approach. Future work may include resilient control design of multi-agent systems with nonholonomic constraints against disruptive disturbances.

\section*{Acknowledgment}
The Project was supported by the Fundamental Research Funds for the Central Universities, China University of Geosciences~(Wuhan).


\end{document}